\documentclass[11pt]{article}
\usepackage[english]{babel}
\usepackage{epsfig}
\usepackage{amsmath}
\usepackage{amsthm}
\usepackage{amssymb}
 
\parindent0mm

\newtheorem{theorem}{Theorem}[section]
\newtheorem{definition}[theorem]{Definition}

\newtheorem{proposition}[theorem]{Proposition}
\newtheorem{corollary}[theorem]{Corollary}
\newtheorem{remark}[theorem]{Remark}

\topmargin=-10mm \oddsidemargin=0mm \evensidemargin=0mm
\textheight=230mm \textwidth=160mm

\begin{document}

\title{\bf Octonionic Kerzman-Stein operators}

\author{
Denis Constales \\
Department of Electronics and Information Systems\\
Faculty of Engineering and Architecture\\
Ghent University\\
Krijgslaan 281-S8\\
9000 Gent, Belgium\\
Denis.Constales@UGent.be\\
\and
Rolf S\"{o}ren Krau{\ss}har\\
Fachbereich Fachwissenschaft Mathematik\\
Erziehungswissenschaftliche Fakult\"at\\
Universit\"at Erfurt\\
Nordh\"auser Str. 63\\
99089 Erfurt, Germany\\
soeren.krausshar@uni-erfurt.de }

\maketitle
\begin{abstract}  
In this paper we consider generalized Hardy spaces in the octonionic setting associated to arbitrary Lipschitz domains where the unit normal field exists almost everywhere. First we discuss some basic properties and explain structural differences to the associative Clifford analysis setting. 
The non-associativity requires special attention in the definition of an appropriate inner product and hence in the definition of a  generalized Szeg\"o projection. Whenever we want to apply classical theorems from reproducing kernel Hilbert spaces we first need to switch to the consideration of real-valued inner products where the Riesz representation theorem holds.  
Then we introduce a generalization of the dual Cauchy transform for octonionic monogenic functions which represents the adjoint transform with respect to the real-valued inner product $\langle  \cdot, \cdot \rangle_0$ together with an associated octonionic Kerzman-Stein operator and related kernel functions.  

Also in the octonionic setting, the Kerzman-Stein operator that we introduce turns out to be a compact operator. A motivation behind this approach is to find an approximative method to compute the Szeg\"o projection of octonionic monogenic functions offering a possibility to tackle BVP in the octonions without the explicit knowledge of the octonionic Szeg\"o kernel which is extremely difficult to determine in general. We also discuss the particular cases of the octonionic unit ball and the half-space. Finally, we relate our octonionic Kerzman-Stein operator to the Hilbert transform and particularly to the Hilbert-Riesz transform in the half-space case. 

\end{abstract}
{\bf Keywords}: octonions, octonionic monogenic functions, octonionic Cauchy transform, Kerzman-Stein operator, Szeg\"o projection\\[0.1cm] 
\noindent {\bf Mathematical Review Classification numbers}: 30G35\\

\section{Introduction}  
In the recent years one noted an increasing interest in the study of generalizations of Bergman and Hardy spaces in the setting of octonionic monogenic function theory. 
\par\medskip\par
In \cite{WL2018} Jinxun Wang and Xingmin Li determined the Bergman and the Szeg\"o kernel for octonionic monogenic functions on the unit ball. In their follow-up paper  \cite{WL2020} they proved a representation formula for the octonionic Bergman kernel of upper half-space. In our recent paper \cite{Kra2020-2} we managed to set up an explicit formula for the octonionic Szeg\"o kernel of upper half-space as well as for the octonionic Szeg\"o and Bergman kernel of strip domains that are bounded in the real direction. Our method used octonionic generalizations of the cotangent and cosecant series. However, due to the lack of associativity in the octonions, our proof relies on the property that the strip domains that we considered are bounded in the real direction only. In one part of our proof we explicitly exploited that a product of three elements $a,b,c \in \mathbb{O}$ where one of these factors lies in the real axis is associative. This is not the case anymore when other directions than the real one are involved. To determine explicit formulas for the Bergman and the Szeg\"o kernel therefore is even more difficult than in the associative case of working in Clifford algebras. 
\par\medskip\par
Note that in classical function theory the Szeg\"o projection which involves the Szeg\"o kernel plays a crucial role in the resolution of singular boundary value problems for octonionic monogenic functions. 
 \par\medskip\par
However, already in the 1970s one discovered in the context of complex and harmonic analysis that there is an alternative possibility to evaluate the Szeg\"o projection without having an explicit formula for the Szeg\"o kernel, namely one can use Kerzman-Stein operators, see for instance the classical references \cite{Bell,KS}. Afterwards, Kerzman-Stein theory has been generalized extensively to the associative Clifford analysis setting, see for example \cite{BL,BD,Ca,Cn,Delanghe,PVL}.
\par\medskip\par
In \cite{XLT2008} Xingmin Li, Zhao Kai and Qian Tao successfully introduced a Cauchy transform in the octonionic setting and were able to set up related Plemelj projection formulas together with a basic toolkit to study operators of Calderon-Zygmund type acting on octonionic monogenic functions defined on some Lipschitz surfaces. See also the more recent paper \cite{Kheyfits} where further connections to harmonic analysis are addressed. 
\par\medskip\par
In this paper we present an attempt to introduce a generalization of the dual Cauchy transform  in the octonionic setting together with an associated Kerzman-Stein operator and a related octonionic Kerzman-Stein kernel. The lack of associativity needs to be carefully taken into account and requires particular attention and arguments. 

In particular, the non-associativity requires a re-definition as well as a different interpretation of the classical constructions. A crucial need is to properly adapt the definition of an appropriately inner product on the corresponding Hardy space of octonionic monogenic functions. 

First we note that in \cite{WL2018,WL2020} the authors used two different definitions of an octonion-valued inner product; one particular definition for the unit ball setting and another one for the half-space setting. An open question was to find out a general explanation for that necessity and to figure out a general scheme behind these particular choices.

After having introduced the basic notions in Section 2, in Section 3 we turn to the question how to  carefully define octonionic monogenic Hardy spaces for general Lipschitz domains that have a smooth boundary almost everywhere. We introduce  two definitions of an inner product that can be applied for all these domains. In the particular setting of the unit ball, the octonionic inner product that we first consider coincides exactly with the particular one considered in \cite{WL2018}. In the half-space setting it also coincides with the particular definition given in \cite{WL2020}. So, we understand how these different definitions arise and how they fit together within a general theory. Furthermore, in the case of associativity that particular inner product always coincides with the inner product considered in complex and Clifford analysis. Additionally, we prove that the octonionic Hardy space really has always a continuous point evaluation.  However, special attention is required when addressing properties like orthogonality or a Fischer-Riesz representation theorem which in turn is essential in the definition of an adjoint. Here, we cannot directly work with the octonionic inner product.  Due to the lack of the $\mathbb{O}$-linearity which in turn is a consequence of the non-associativity, we do not have a Cauchy-Schwarz inequality in general. One possible way to overcome this problem is to work with a real-valued inner product that may be derived from the octonionic one by taking its scalar part whenever we need to apply classical theorems for reproducing kernel spaces. In terms of this inner product all the classical notions then  can be well-defined. 


As a consequence of the non-associativity, the construction of an adjoint Cauchy transform is a non-trivial problem, too. Furthermore, it crucially relies on the particular definition of the special inner product. The adjoint that we present in this paper has to be understood in the sense of the real-valued inner product $\langle \cdot, \cdot \rangle_0$. 
Nevertheless, all our constructions are completely compatible with the classical ones as soon as one has  associativity. 

\par\medskip\par

After having introduced a Kerzman-Stein kernel we prove some basic properties of the related octonionic Kerzman-Stein operators. It is a skew symmetric operator and the kernel vanishes exactly if and only if the domain is the octonionic unit ball. In fact our proposal for a dual Cauchy transform coincides with the Cauchy transform exactly and exclusively in the case of the unit ball providing us with a nice analogy to the classical theory.  

\par\medskip\par

Furthermore, we show that this octonionic version of the Kerzman-Stein operator is a compact operator. 

Compactness is a key ingredient if we want to develop an approximative construction method to evaluate the octonionic Szeg\"o projection and to compute the Szeg\"o kernel functions purely relying on the global Cauchy kernel and the particular geometry of the boundary of the domain.
 
\par\medskip\par

Again we pay special attention to the particular context of the octonionic unit ball and octonionic upper half-space. Finally, we relate the octonionic Kerzman-Stein operator to the Hilbert transform and particularly to the Hilbert-Riesz transform in the half-space case.
   
\section{Preliminaries}

\subsection{Basics on octonions}

The famous theorem of Hurwitz tells us that $\mathbb{R}$, $\mathbb{C}$, the Hamiltonian skew field of the quaternions $\mathbb{H}$ and the octonions $\mathbb{O}$ invented by Graves and Cayley are the only real normed division algebras up to isomorphy. The octonions represent an eight-dimensional real non-associative algebra over the field of real numbers.  
Following for instance \cite{Baez,Ward} and many other classical references, one can construct the octonions by applying the so-called Cayley-Dickson doubling process. To leave it simple, let us take two pairs of complex numbers $(a,b)$ and $(c,d)$. Then one defines an addition and multiplication operation on these pairs by  
$$
(a,b)+(c,d) :=(a+c,b+d),\quad\quad (a,b)\cdot (c,d) := (ac-d\overline{b},\overline{a}d+cb) 
$$ 
where $\overline{\cdot}$ represents the classical complex conjugation. Subsequentially, this automorphism is extended to an anti-automorphism by defining $\overline{(a,b)}:=(\overline{a},-b)$ on this set of pairs of numbers $(a,b)$. We just have constructed the real Hamiltonian quaternions $\mathbb{H}$. Each quaternion can be written as $x=x_0 + x_ 1e_1 + x_2 e_2 + x_3 e_3$ where $e_i^2=-1$ for $i=1,2,3$. Furthermore, we have $e_1 e_2 = e_3$, $e_2 e_3 = e_1$, $e_3 e_1 = e_2$ and $e_i e_j = - e_j e_i$ for all mutually  distinct $i,j \in \{1,2,3\}$ like for the usual vector product on $\mathbb{R}^3$-vectors. Already this relation exhibits that $\mathbb{H}$ is not commutative anymore, but it is still associative.

After applying once more this duplication process (now on pairs of quaternions), then one has constructed the octonions $\mathbb{O}$.  In real coordinates these can be expressed in the form  
 $$
 x = x_0 + x_1 e_1 + x_2 e_2 + x_3 e_3 + x_4 e_4 + x_5 e_5 + x_6 e_6 + x_7 e_7
 $$
 where $e_4=e_1 e_2$, $e_5=e_1 e_3$, $e_6= e_2 e_3$ and $e_7 = e_4 e_3 = (e_1 e_2) e_3$. 
 Like for quaternions, we also have $e_i^2=-1$ for all $i =1,\ldots,7$ and $e_i e_j = -e_j e_i$ for all mutual distinct $i,j \in \{1,\ldots,7\}$. The way how the octonionic multiplication works is easily visible from the following  table    
\begin{center}
 \begin{tabular}{|l|rrrrrrr|}
 $\cdot$ & $e_1$&  $e_2$ & $e_3$ & $e_4$ & $e_5$ & $e_6$  & $e_7$ \\ \hline
 $e_1$  &  $-1$ &  $e_4$ & $e_5$ & $-e_2$ &$-e_3$ & $-e_7$ & $e_6$ \\
 $e_2$ &  $-e_4$&   $-1$ & $e_6$ & $e_1$ & $e_7$ & $-e_3$ & $-e_5$ \\
 $e_3$ &  $-e_5$& $-e_6$ & $-1$  & $-e_7$&$e_1$  & $e_2$  & $e_4$ \\
 $e_4$ &  $e_2$ & $-e_1$ & $e_7$ & $-1$  &$-e_6$ & $e_5$  & $-e_3$\\
 $e_5$ &  $e_3$ & $-e_7$ & $-e_1$&  $e_6$&  $-1$ & $-e_4$ & $e_2$ \\
 $e_6$ &  $e_7$ &  $e_3$ & $-e_2$& $-e_5$& $e_4$ & $-1$   & $-e_1$ \\
 $e_7$ & $-e_6$ &  $e_5$ & $-e_4$& $e_3$ & $-e_2$& $e_1$  & $-1$ \\ \hline 	
 \end{tabular}
\end{center}
In contrast to a lot of other papers we label the multiplicative algebraic independent units by $e_1$, $e_2$ and $e_3$ and use the same notation as in \cite{Baez}. Note that in this notation the quaternions are represented by the elements of the form $x = x_0 + x_1 e_1 + x_2 e_2 + x_4 e_4$, since in our definition we have $e_4:=e_1 e_2$. Here $e_3$ in another algebraic independent unit. To present expressions like sums in a compact form we also formally use the notation $e_0:=1$. 

As one can also verify by means of this table, we have lost the associativity. Nevertheless, we still deal with a division algebra. Furthermore, the octonions satisfy the alternative property and they still form a composition algebra. 

We have the Moufang rule $(ab)(ca) = a((bc)a)$ holding for all $a,b,c \in \mathbb{O}$. Taking especially $c=1$, then one obtains the flexibility condition $(ab)a= a(ba)$.  

Let $a = a_0 + \sum\limits_{i=1}^7 a_i e_i$ be an element of $\mathbb{O}$. We call $\Re(a):=a_0$ the real part of $a$. $a_i$ will be called the $i$-part of $a$ in the sequel.

The inherited conjugation map imposes the properties $\overline{e_j}=-e_j$ for all $j =1,\ldots,7$ while it leaves the real component invariant, i.e. we have $\overline{a_0}=a_0$ for all $a_0 \in \mathbb{R}$. Applying the conjugation to the product of two octonions $a,b \in \mathbb{O}$ then one gets $\overline{a b} = \overline{b}\; \overline{a}$, like in the quaternionic setting.  

The Euclidean norm and standard scalar product from $\mathbb{R}^8$ can be expressed in the octonionic setting in the way $$
\langle a,b \rangle := \sum\limits_{i=0}^7 a_i b_i = \Re\{a \overline{b}\} = \Re\{\overline{a} b\}
$$ 
and $|a|:= \sqrt{\langle a,a\rangle} = \sqrt{\sum\limits_{i=0}^7 a_i^2}$. The norm composition property $|a \; b|=|a||b|$ holds for all $a,b \in \mathbb{O}$. Every non-zero octonion $a \in \mathbb{O}$ is invertible with $a^{-1} =\overline{a}/|a|^2$, which means that there are no zero-divisors in $\mathbb{O}$.   

Another important octonionic calculation rule is the identity 
\begin{equation}\label{dieckmann}
(a\overline{b})b = \overline{b}(ba) =a(\overline{b}b)=a(b \overline{b})
\end{equation}  
which is true for all $a,b \in \mathbb{O}$ and, 
$\Re\{b(\overline{a}a)c\} =\Re\{(b \overline{a})(ac)\}$ for all $a,b,c \in \mathbb{O}$. An explicit proof is presented for example in \cite{CDieckmann} Proposition 1.6. Analogously, one can prove that $(a b)b = b(ba) =a(bb)$. 
Another fundamental property that we are going to use is that all $a,b,c \in \mathbb{O}$ satisfy 
\begin{equation}
\label{scalarprod}
\langle ab,c \rangle = \langle b, \overline{a}c \rangle,
\end{equation}
cf. \cite{XLT2008} Corollary~3.5. This property will be of crucial importance for the existence of an octonionic adjoint operator in the context of real-valued inner products.  
\par\medskip\par
We also use the notation $B_8(p,r) :=\{x \in \mathbb{O} \mid |x-p| < r\}$ (resp. $\overline{B_8(p,r)} :=\{x \in \mathbb{O} \mid |x-p| \le r\}$) for the eight-dimensional solid open (resp.  closed) ball of radius $r$ centered around $p$ in the octonions. By $S_7(p,r)$ we address the seven-dimensional sphere $S_7(p,r) :=\{x \in \mathbb{O} \mid |x-p| = r\}$. If $x=0$ and $r=1$ then we simply denote the unit ball and the unit sphere by $B_8$ and $S_7$, respectively. The notation $\partial B_8(p,r)$ means the same as $S_7(p,r)$ throughout the whole paper.   
 
\subsection{Basics on octonionic monogenic function theory}
In this subsection we summarize the most important function theoretic properties. Like in the context of quaternions and Clifford algebras, also the octonions offer different approaches to introduce generalizations of complex function theory. 

From \cite{DS,Nono,XL2000} and elsewhere we recall 
\begin{definition}
	Let $U \subseteq \mathbb{O}$ be an open set. Then a real differentiable function $f:U \to \mathbb{O}$ is called left (resp. right) octonionic monogenic if ${\cal{D}} f = 0$ (resp. $f {\cal{D}} = 0$). Here, $
	{\cal{D}}:= \frac{\partial }{\partial x_0} + \sum\limits_{i=1}^7 e_i \frac{\partial }{\partial x_i}$ denotes the octonionic Cauchy-Riemann operator, where $e_i$ are the octonionic units introduced above. If $f$ satisfies $\overline{{\cal{D}}}f = 0$ (resp. $f\overline{\cal{D}} = 0$) we call $f$ left (resp. right) octonionic anti-monogenic. 
\end{definition}
In contrast to quaternionic and Clifford analysis, the set of left (right) octonionic monogenic functions does neither form a right nor a left ${\mathbb{O}}$-module. Following \cite{KO2019}, a simple counterexample can be presented by taking the function $f(x):= x_1 - x_2 e_4$. It satisfies ${\cal{D}}[f(x)] = e_1 - e_2 e_4 = e_1 - e_1 = 0$. However, $g(x):=(f(x))\cdot e_3 = (x_1 - x_2 e_4) e_3 = x_1 e_3 - x_2 e_7$ satisfies ${\cal{D}}[g(x)] = e_1 e_3 - e_2 e_7 = e_5 -(-e_5) = 2 e_5 \neq 0$. The lack of associativity obviously destroys the modular structure of octonionic monogenic functions which already represents one substantial difference to Clifford analysis. Clifford analysis in $\mathbb{R}^8$ and octonionic analysis are essentially different function theories, see also \cite{KO2018}. 

However, alike in Clifford analysis, also octonionic monogenic functions satisfy a Cauchy integral theorem, cf.  for instance  \cite{XL2002}. 
 \begin{proposition}\label{cauchy} (Cauchy's integral theorem)\\
Let $G \subseteq \mathbb{O}$ be a bounded $8$-dimensional connected star-like domain with an orientable strongly Lipschitz boundary $\partial G$. Let $f \in C^1(\overline{G},\mathbb{O})$. If $f$ is left (resp. right) octonionic monogenic inside of $G$, then 
$$
\int\limits_{\partial G} d\sigma(x) f(x) = 0,\quad {\rm resp.}\;\;\int\limits_{\partial G} f(x) d\sigma(x) = 0
$$  	
where $d\sigma(x) = \sum\limits_{i=0}^7 (-1)^j e_i \stackrel{\wedge}{d x_i} = n(x) dS(x)$, where $\stackrel{\wedge}{dx_i} = dx_0 \wedge dx_1 \wedge \cdots dx_{i-1} \wedge dx_{i+1} \cdots \wedge dx_7$ and where $n(x)$ is the outward directed unit normal field at $x \in \partial G$ and $dS(x) =|d \sigma(x)|$ the ordinary scalar surface Lebesgue measure of the $7$-dimensional boundary surface.  
 \end{proposition}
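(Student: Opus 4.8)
\emph{Sketch of the intended proof.} The plan is to reduce the boundary integral to a volume integral of ${\cal D}f$ by means of the octonionic Gauss--Green (divergence) formula, exactly as one does for holomorphic functions in the plane and for Clifford-monogenic functions in $\R^{n}$. I treat the left-monogenic case; the right-monogenic one is the mirror image. Regard $d\sigma(x)f(x)=\sum_{i=0}^{7}(-1)^{i}\bigl(e_{i}f(x)\bigr)\stackrel{\wedge}{dx_{i}}$ as an $\mathbb{O}$-valued $7$-form $\omega$ on $\overline{G}$ and apply Stokes' theorem $\int_{\partial G}\omega=\int_{G}d\omega$. Since each $e_{i}$ is a fixed octonion, left multiplication by $e_{i}$ is $\R$-linear and commutes with the exterior derivative, so $d\bigl(e_{i}f(x)\bigr)=\sum_{j=0}^{7}e_{i}\frac{\partial f}{\partial x_{j}}\,dx_{j}$; after wedging with $\stackrel{\wedge}{dx_{i}}$ only the term $j=i$ survives, and $dx_{i}\wedge\stackrel{\wedge}{dx_{i}}=(-1)^{i}\,dx_{0}\wedge\cdots\wedge dx_{7}$. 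The two factors $(-1)^{i}$ cancel, so $d\omega=\bigl(\sum_{i=0}^{7}e_{i}\frac{\partial f}{\partial x_{i}}\bigr)dV=({\cal D}f)\,dV$. Hence $\int_{\partial G}d\sigma(x)f(x)=\int_{G}({\cal D}f)(x)\,dV=0$ by left monogenicity. Multiplying $d\sigma$ on the left by $f$ instead produces $d\omega=(f{\cal D})\,dV$, which yields the right-monogenic statement.

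I would emphasise at the outset that non-associativity plays no role at this stage: the only products entering the argument are of the form $e_{i}\cdot(\text{octonion-valued function})$, i.e. a \emph{constant} octonion times an octonion, and no trilinear expression with three ``variable'' factors ever occurs, so in particular $d(e_{i}f)=e_{i}\,df$ is unambiguous. This is in sharp contrast with the dual Cauchy transform, the inner products and the Kerzman--Stein operator of the later sections, where genuine triple products arise and the bracketing has to be tracked with care.

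The only step requiring genuine care is the validity of Stokes'/Gauss' theorem when $\partial G$ is merely strongly Lipschitz rather than smooth. I would settle this by exhausting $G$ from inside by smooth star-like domains $G_{k}\uparrow G$ --- available because $G$ is star-like, e.g. by mollifying its radial boundary function and shrinking slightly --- applying the classical smooth divergence theorem on each $G_{k}$ to obtain $\int_{\partial G_{k}}d\sigma\,f=\int_{G_{k}}({\cal D}f)\,dV=0$, and then passing to the limit $k\to\infty$: the volume integrals tend to $\int_{G}({\cal D}f)\,dV=0$ by dominated convergence since $f\in C^{1}(\overline{G})$, while the surface integrals converge because for Lipschitz graphs the unit normal fields and surface measures of $G_{k}$ converge to those of $G$ in the relevant weak sense and $f$ is uniformly continuous on $\overline{G}$. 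Equivalently, one may simply invoke the Gauss--Green formula for strongly Lipschitz domains (equivalently, sets of locally finite perimeter), which is classical and needs no star-likeness. This passage from the smooth to the Lipschitz case --- and not the octonionic algebra --- is the one genuine obstacle; the algebraic computation above is completely routine.
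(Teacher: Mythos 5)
Your argument is correct and is the standard Stokes-theorem proof. Note that the paper itself does not prove Proposition~\ref{cauchy} at all: it merely recalls the statement from the literature, citing \cite{XL2002}. The computation you give --- rewriting $d\sigma(x)f(x)$ as an $\mathbb{O}$-valued $7$-form, observing that $dx_i\wedge\stackrel{\wedge}{dx_i}=(-1)^i\,dV$ cancels the sign in $d\sigma$, and concluding $d\omega=({\cal D}f)\,dV$ --- is exactly the argument of that reference, and your explicit remark that only binary products (a constant $e_i$ times an octonion) occur, so that non-associativity is invisible at this stage, is the right point to stress in this paper's context. Your treatment of the merely Lipschitz boundary (exhaustion by smooth subdomains, or directly the Gauss--Green formula for sets of finite perimeter) adequately closes the one genuine regularity gap; the only quibble is that a star-like domain with strongly Lipschitz boundary need not have a Lipschitz \emph{radial} boundary function, so the mollification-of-the-radial-graph construction should be replaced by (or subordinated to) the general Lipschitz divergence theorem you also invoke.
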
 
Following \cite{XL2000}, another structural difference to Clifford analysis is reflected in the lack of a direct analogue of the Stokes formula. Even in the cases where simultaneously ${\cal{D}} f = 0$ and $g {\cal{D}} = 0$ holds, we do not have in general that 
 	$$
 	\int\limits_{\partial G} g(x) \; (d\sigma(x) f(x)) = 0 \quad {\rm nor}\quad 
	\int\limits_{\partial G} (g(x) d\sigma(x)) \; f(x) = 0.
 	$$
Again, the obstruction to get such an identity in general is caused by the non-associativity. Following \cite{XLT2008} one has 
$$
\int\limits_{\partial G} g(x) \; (d\sigma(x)  f(x)) = \int\limits_G \Bigg(   
g(x)({\cal{D}} f(x)) + (g(x){\cal{D}})f(x) - \sum\limits_{j=0}^7 [e_j, {\cal{D}}g_j(x),f(x)]
\Bigg) dV
$$
where $[a,b,c] := (ab)c - a(bc)$ stands for the associator of three octonionic elements. 
A very particular situation is obtained when inserting for $g$ the left and right octonionic monogenic Cauchy kernel $$q_{\bf 0}: \mathbb{O} \backslash\{0\} \to \mathbb{O},\;q_{\bf 0}(x) := \frac{x_0 - x_1 e_1 - \cdots - x_7 e_7}{(x_0^2+x_1^2+\cdots + x_7^2)^4} = \frac{\overline{x}}{|x|^8}.$$ 

 From \cite{Nono,XL2002} and elsewhere we may recall:
 \begin{proposition}\label{cauchy1}(Cauchy's integral formula).\\
Let $U \subseteq \mathbb{O}$ be a non-empty open set and $G \subseteq U$ be an $8$-dimensional compact oriented manifold with a strongly Lipschitz boundary $\partial G$. If $f: U \to \mathbb{O}$ is left (resp. right) octonionic monogenic, then for all $x \not\in \partial G$
$$
\chi(x)f(x)= \frac{3}{\pi^4} \int\limits_{\partial G} q_{\bf 0}(y-x) \Big(d\sigma(y) f(y)\Big),\quad\quad \chi(x) f(x)= \frac{3}{\pi^4} \int\limits_{\partial G}   \Big(f(y)d\sigma(y)\Big) q_{\bf 0}(y-x),
$$
where $\chi(x) = 1$ if $x$ is in the interior of $G$ and $\chi(x)=0$ if $x$ in the exterior of $G$. 
 \end{proposition}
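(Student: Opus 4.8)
The plan is to follow the classical route through the octonionic Stokes-type formula recalled above, paying attention to the associator correction term. First I would record that $q_{\bf 0}(x)=\overline{x}/|x|^8$ is both left and right octonionic monogenic on $\mathbb{O}\setminus\{0\}$: using $\partial_{x_k}\overline{x}=\overline{e_k}$ (with $e_0=1$), $\partial_{x_k}|x|^{-8}=-8x_k|x|^{-10}$, and the fact that real scalars associate with all octonions, one obtains
\[
{\cal D}q_{\bf 0}=|x|^{-8}\sum_{k=0}^{7}e_k\overline{e_k}-8|x|^{-10}\Big(\sum_{k=0}^{7}x_ke_k\Big)\overline{x}=8|x|^{-8}-8|x|^{-8}=0,
\]
and symmetrically $q_{\bf 0}{\cal D}=0$. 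I would also note that the constant $\tfrac{3}{\pi^4}$ equals $1/\omega_8$, where $\omega_8=2\pi^4/\Gamma(4)=\pi^4/3$ is the surface area of $S_7\subset\mathbb{R}^8$.

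Next I would substitute $g(y):=q_{\bf 0}(y-x)$ into the octonionic Stokes-type formula. Since $g$ is right monogenic and $f$ is left monogenic, the terms $g({\cal D}f)$ and $(g{\cal D})f$ vanish, and only the volume integral of $\sum_{j=0}^{7}[e_j,{\cal D}g_j,f]$ survives. The crucial point is that this associator term vanishes identically. Writing $z:=y-x=\sum_k z_ke_k$ one has $g_0=z_0|z|^{-8}$ and $g_i=-z_i|z|^{-8}$, hence ${\cal D}g_0=|z|^{-8}-8z_0z|z|^{-10}$ and ${\cal D}g_i=-e_i|z|^{-8}+8z_iz|z|^{-10}$ for $i\ge1$. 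The $j=0$ term is zero since an associator containing $1$ vanishes; in the $j=i\ge1$ term the contribution of $-e_i|z|^{-8}$ drops out because $[e_i,e_i,f]=0$ by the alternative property, and the remaining contributions add up to $8|z|^{-10}\big[\sum_{i\ge1}z_ie_i,\,z,\,f\big]=8|z|^{-10}[z,z,f]=0$, again by alternativity. Therefore $\int_{\partial G}q_{\bf 0}(y-x)\big(d\sigma(y)f(y)\big)=0$ whenever $q_{\bf 0}(\cdot-x)$ is $C^1$ on $G$, that is, whenever $x\notin G$; this establishes the case $\chi(x)=0$.

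For $x$ in the interior of $G$, I would apply the same identity on the Lipschitz domain $G_\varepsilon:=G\setminus\overline{B_8(x,\varepsilon)}$ with $\varepsilon>0$ small enough that $\overline{B_8(x,\varepsilon)}\subset G$. The associator term still vanishes and both functions are now $C^1$ on $G_\varepsilon$, so
\[
\int_{\partial G}q_{\bf 0}(y-x)\big(d\sigma(y)f(y)\big)=\int_{S_7(x,\varepsilon)}q_{\bf 0}(y-x)\big(d\sigma(y)f(y)\big),
\]
where the small sphere carries the unit normal pointing away from $x$. Parametrising $y=x+\varepsilon\omega$, $\omega\in S_7$, gives $q_{\bf 0}(\varepsilon\omega)=\overline{\omega}/\varepsilon^{7}$ and $d\sigma(y)=\omega\,\varepsilon^{7}\,dS(\omega)$, so the integrand collapses to $\overline{\omega}\big(\omega f(x+\varepsilon\omega)\big)\,dS(\omega)=f(x+\varepsilon\omega)\,dS(\omega)$ by the identity $\overline{\omega}(\omega a)=a(\overline{\omega}\omega)=a$ from \eqref{dieckmann}. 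Letting $\varepsilon\to0$ and using the continuity of $f$ yields $\int_{\partial G}q_{\bf 0}(y-x)\big(d\sigma(y)f(y)\big)=f(x)\,\omega_8=\tfrac{\pi^4}{3}f(x)$, which is the asserted formula after multiplication by $3/\pi^4$. The right monogenic version follows symmetrically, keeping $q_{\bf 0}(y-x)$ on the right and using $(a\omega)\overline{\omega}=a$.

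The principal obstacle is the non-associativity: the whole argument hinges on the vanishing of the associator correction term attached to the kernel $q_{\bf 0}$, and that is exactly the place where one must invoke the alternative property (via $[e_i,e_i,f]=0$ and $[z,z,f]=0$) rather than genuine associativity; a minor additional point is the careful bookkeeping of the orientation of the excised sphere $S_7(x,\varepsilon)$.
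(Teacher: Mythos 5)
The paper offers no proof of this proposition: it is explicitly recalled from the literature (\cite{Nono,XL2002}), so there is nothing internal to compare against. Your argument is correct and is essentially the standard derivation in those references: verify that $q_{\bf 0}$ is two-sided monogenic, feed $g(y)=q_{\bf 0}(y-x)$ into the Stokes-type formula with the associator correction, show that correction vanishes for this particular kernel by alternativity, and then excise a small ball and use $\overline{\omega}(\omega a)=a(\overline{\omega}\omega)=a$ together with $\omega_8=\pi^4/3$ to produce the residue $f(x)$. All the computations check out; the one place worth tightening is the line $\bigl[\sum_{i\ge1}z_ie_i,\,z,\,f\bigr]=[z,z,f]$, since $\sum_{i\ge1}z_ie_i=z-z_0$ rather than $z$; the identity still holds because an associator with a real entry vanishes, so $[z-z_0,z,f]=[z,z,f]-z_0[1,z,f]=[z,z,f]=0$, but that half-sentence should be said. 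You correctly identify the associator term as the genuine obstacle created by non-associativity and the alternative law as the reason it disappears for the Cauchy kernel, which is precisely the point the paper emphasizes when it contrasts the two possible placements of parentheses in the Cauchy integral.
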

The way how the parenthesis are put is crucial again. Putting the parenthesis differently, leads in the left octonionic monogenic case to the different formula of the form
$$
\frac{3}{\pi^4} \int\limits_{\partial G} \Big( q_{\bf 0}(y-x) d\sigma(y)\Big) f(y) = \chi(x) f(x) + \int\limits_G \sum\limits_{i=0}^7 \Big[q_{\bf 0}(y-x),{\cal{D}}f_i(y),e_i  \Big] dy_0 \cdots dy_7, 
$$
again involving the associator, cf. \cite{XL2002}. The volume integral term appearing additionally always vanishes in associative algebras, such as in Clifford or quaternionic analysis.
\par\medskip\par
To round off this preliminary section we wish to emphasize that there also exist alternative powerful extensions of complex function theory to the octonionic setting. For instance there is the complementary theory of slice-regular octonionic functions which is essentially different from that of octonionic monogenic functions, although there are connections by Fueter's theorem or the Radon transformation. The classical approach (see \cite{StruppaGentili}) extends complex-analytic functions from the plane to the octonions by applying a radially symmetric model fixing the real line. More  recently, see for instance \cite{JRS} and \cite{Ghiloni2020}, one also started to study octonionic slice-regular extensions departing differently from monogenic functions that are defined in the quaternions. However, in this paper we restrict ourselves to entirely focus on the theory of octonionic monogenic functions, although we also expect that one can successfully establish similar results in the alternative framework of slice-regular functions in $\mathbb{O}$. Apart from octonionic monogenic function theory and slice-regular octonionic function theories there are even more possibilities for introducing further complementary function theories in octonions.  

\section{Main results}
Throughout this section let $\Omega \subset \mathbb{O}$ be a simply-connected orientable domain with a strongly Lipschitz boundary, say $\Sigma=\partial \Omega$, where the exterior normal field exists almost everywhere. Let us denote by $n(y)$ the exterior unit normal octonion at a point $y \in \partial \Omega$. 

Next, let $H^2(\partial \Omega, \mathbb{O})$ be the closure of the set of $L^2(\partial \Omega)$-octonion-valued functions that are left octonionic monogenic functions inside of $\Omega$ and that have a continuous extension to the boundary $\partial \Omega$. For a lot of interesting insight in the general study of octonionic Hilbert spaces we also refer the interested reader to \cite{L} where the focus is put on different aspects. 

\subsection{Attempts to define an octonionic monogenic Szeg\"o projection}
To introduce a meaningful generalization of a Hardy space in the octonionic setting, one first needs to define a properly adapted inner product. Being inspired by the preceding papers \cite{WL2018,WL2020} we first consider the following definition: 
\begin{definition} For any pair of octonion-valued functions $f,g \in L^2(\partial \Omega)$ one defines the following $\mathbb{R}$-linear octonion-valued inner product
\begin{eqnarray*}
(f,g)_{\partial \Omega} &:=& \frac{3}{\pi^4} \int\limits_{\partial \Omega}(\overline{n(x)g(x)})\; (n(x)f(x)) dS(x)\\
&=& \frac{3}{\pi^4} \int\limits_{\partial \Omega} (\overline{g(x)}\; \overline{n(x)})\; (n(x) f(x)) dS(x),
\end{eqnarray*}
where $dS(x)$ again represents the scalar Lebesgue surface measure on $\partial \Omega$.   
\end{definition}
\par\medskip\par
When it is clear to which domain we refer, we omit the subindex $\partial\Omega$ for simplicity. Note that the factor $\frac{3}{\pi^4}$ is part of this definition.  
By a direct calculation one observes that $(\cdot,\cdot)$ is $\mathbb{R}$-linear.  
For all octonion-valued functions $f,g,h \in L^2(\partial \Omega)$ and all $\alpha,\beta\in \mathbb{R}$ we have $(f+g,h) = (f,h) + (g,h)$ and $(\alpha f,g \beta) = \alpha(f,g)\beta$. Notice that in view of the lack of associativity $(\cdot,\cdot)$ is only $\mathbb{R}$-linear but not $\mathbb{O}$-linear. The lack of $\mathbb{O}$-linearity however implies some serious obstacles, since we cannot rely on a Cauchy-Schwarz inequality. Consequently, we cannot rely on a direct analogue of the Fischer-Riesz representation theorem using this  definition of inner product.  

We still may observe that $(\cdot,\cdot)$ is Hermitian in the sense of the octonionic conjugation, since 
\begin{eqnarray*}
\overline{(f,g)} &=&  \overline{\frac{3}{\pi^4} \int\limits_{\partial \Omega}(\overline{n(x)g(x)})\; (n(x)f(x)) dS(x)} \\
&=& \frac{3}{\pi^4} \int\limits_{\partial \Omega}(\overline{n(x)f(x)}) \; (n(x)g(x)) dS(x)\\
& = & (g,f). 
\end{eqnarray*}
 
One may also directly observe that 
$$
(f,f) = \frac{3}{\pi^4} \int\limits_{\partial \Omega} (\overline{f(x)}\; \overline{n(x)})\; (n(x)\; f(x)) dS(x) = \frac{3}{\pi^4} \int\limits_{\partial \Omega} |f(x)|^2 dS(x) = \|f\|^2_{L^2},
$$
since the product inside the integral is generated by only two elements $n(x)$ and $f(x)$ and hence it is  associative according to Artin's theorem.  
\par\medskip\par
Endowed with this inner product it is suggestive to call $H^2(\partial \Omega, \mathbb{O})$ the (left) octonionic monogenic Hardy space of $\Omega$ in some wider sense. Note that the term ``space'' has to be understood in the sense of a real vector space when using this octonion-valued inner product.  
\begin{remark}
Notice further that if we were in an associative setting (such as in complex or Clifford analysis), then one would have 
$$
(\overline{g(x)}\; \overline{n(x)})\; (n(x)\; f(x)) = \overline{g(x)} |n(x)|^2 f(x) = \overline{g(x)}f(x).
$$
So, one re-discovers the usual definition of the Hardy space inner product used in the classical framework. 
\par\medskip\par
In the octonionic setting the introduction of the normal field $n$ inside these brackets makes a difference. It allows us to recognize the Cauchy transform in the framework of this inner product. This in turn permits us to use the special properties of the Cauchy transform in this context. Furthermore, it gives us a hint what might be a meaningful octonionic monogenic definition of a generalization of the adjoint Cauchy transform (again in a wider sense) and how to define a compact Kerzman-Stein operator.  
\end{remark}
\begin{remark}
In the particular case where $\Omega = B_8(0,1)$ is the octonionic unit ball which has been addressed in {\rm \cite{WL2018}} one has exactly that $n(x) = x$. In this case the inner product $(\cdot,\cdot)$ simplifies to 
$$
(f,g)_{S_7} = \frac{3}{\pi^4} \int\limits_{\partial B_8(0,1)} (\overline{x \; g(x)}) \; (x \; f(x)) dS(x)
$$
and we re-obtain exactly the definition introduced in {\rm \cite{WL2018}}. 
\par\medskip\par
In the special case where $\Omega = H^{+}(\mathbb{O}) = \{ x \in \mathbb{O} \mid x_0 > 0\}$ is the octonionic half-space, one has even more simply that $n(x) = - 1$. Now the corresponding inner product reduces to 
$$
(f,g)_{H^+}  = \frac{3}{\pi^4} \int\limits_{\partial H^+} \overline{g(x)} f(x) dS(x) 
             = \frac{3}{\pi^4} \int\limits_{\mathbb{R}^7} \overline{g(x)} f(x) dx_1 dx_2 \cdots dx_7
$$
like in the classical associative cases of complex and Clifford analysis. The use of the usual inner product suggested for the treatment of the half-space in {\rm \cite{WL2020}} thus makes completely sense and fits well in that context. 
\end{remark}
A crucial question is to ask whether or not there always exists a reproducing kernel function. For the case of the unit ball, upper half-space and strip domains being bounded in the $x_0$-direction the existence has been shown by presenting an explicit kernel function that is octonionic monogenic in the first variable and octonionic anti-monogenic in the second variable which in fact turned out to reproduce all $f$ belonging to $H^2$ using exactly this definition of inner product. Here, one actually exploited the Cauchy integral formula. In order to apply Cauchy's integral formula, the presence of the normal field is indeed crucially important.  Actually, in the case of the unit ball, the octonionic Szeg\"o projection defined by 
$[{\cal{S}} f](y) := (f,S(\cdot,y))_{S_7}$ where $S(x,y) = \frac{1-\overline{x}y}{|1-\overline{x}y|^8}$ obviously is octonionic monogenic in $x$ and octonionic anti-monogenic in $y$, coincides with the Cauchy transformation when using exactly that definition, cf. \cite{WL2018}. A similar relation has been proved for the half-space $x_0 > 0$ where we have $[{\cal{S}} f](y) := (f,S(\cdot,y))_{H^{+}}$ with $S(x,y) = \frac{\overline{x}+y}{|\overline{x}+y|^8}$ for all $f \in H^2(\partial H^+)$. Finally, an analogous formula has been established for strip domains being bounded in the $x_0$-direction, see \cite{Kra2020-2}. 

However, for other domains the existence of an octonionic monogenic Szeg\"o kernel is far from being evident. This will be explained in what follows. 
\par\medskip\par
First we point out that it is still very easy to prove 
\begin{proposition}
Let $\Omega \subset \mathbb{O}$ be a general simply-connected orientable domain where the exterior unit normal exists almost everywhere.  
The set $H^2(\partial \Omega, \mathbb{O})$ equipped with the above mentioned octonion-valued inner product satisfies the Bergman condition.
\end{proposition}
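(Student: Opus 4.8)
The plan is to unpack the phrase ``Bergman condition'' into its two constituents and verify each. First, that $H^2(\partial\Omega,\mathbb{O})$ is a closed real subspace of $L^2(\partial\Omega)$ --- which is immediate, since by construction it \emph{is} the $L^2$-closure of the class $\mathcal{A}$ of functions that are left octonionic monogenic in $\Omega$, continuous on $\overline{\Omega}$, and whose boundary trace lies in $L^2(\partial\Omega)$. Second, and this is the substantial point, that point evaluations at interior points are continuous, uniformly on compact subsets: for every compact $K\subset\Omega$ there is $C_K>0$ with $\sup_{y\in K}|f(y)|\le C_K\,\|f\|$ for all $f\in\mathcal{A}$. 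Granting the second property, the functional $f\mapsto f(y)$ extends from the dense class $\mathcal{A}$ to all of $H^2(\partial\Omega,\mathbb{O})$, norm convergence forces locally uniform convergence of the interior monogenic extensions, and one obtains a bona fide reproducing kernel structure --- for the real-valued inner product $\langle\cdot,\cdot\rangle_0$ obtained as the scalar part, where the Riesz representation theorem is available.

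To prove the estimate I would fix $y\in\Omega$, set $\delta:=\mathrm{dist}(y,\partial\Omega)>0$, and start from Cauchy's integral formula (Proposition~\ref{cauchy1}) applied to $f\in\mathcal{A}$,
$$
f(y)=\frac{3}{\pi^4}\int\limits_{\partial\Omega} q_{\bf 0}(z-y)\,\bigl(d\sigma(z)\,f(z)\bigr).
$$
Since the norm composition property $|ab|=|a||b|$ holds for \emph{all} octonions regardless of associativity, and $|q_{\bf 0}(z-y)|=|z-y|^{-7}$, $|d\sigma(z)|=dS(z)$, the triangle inequality for octonion-valued integrals together with the scalar Cauchy--Schwarz inequality gives
$$
|f(y)|\;\le\;\frac{3}{\pi^4}\int\limits_{\partial\Omega}\frac{|f(z)|}{|z-y|^{7}}\,dS(z)
\;\le\;\frac{3}{\pi^4}\Bigl(\,\int\limits_{\partial\Omega}\frac{dS(z)}{|z-y|^{14}}\Bigr)^{1/2}\Bigl(\,\int\limits_{\partial\Omega}|f(z)|^{2}\,dS(z)\Bigr)^{1/2}.
$$
The second factor is a fixed multiple of $\|f\|$; the first is finite, being at most $\delta^{-7}|\partial\Omega|^{1/2}$ when $\Omega$ is bounded, and still convergent in the unbounded case (such as the half-space) because $|z-y|^{-14}$ is integrated over the $7$-dimensional surface $\partial\Omega$, so the tail behaves like $\int^{\infty} r^{-8}\,dr<\infty$ --- here one uses that a strongly Lipschitz boundary has at most polynomially growing surface measure. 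Letting $y$ range over a compact $K$ with $\mathrm{dist}(K,\partial\Omega)=\delta_K>0$ makes the resulting constant uniform, which is the desired $C_K$.

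The genuine obstacle is not this estimate but the legitimacy of invoking Proposition~\ref{cauchy1} over $\partial\Omega$ for an $f$ that is monogenic only in the \emph{open} set $\Omega$ and merely continuous up to $\partial\Omega$, rather than monogenic on an open neighbourhood of $\overline{\Omega}$ as the literal statement demands --- all the more so for a general, possibly unbounded, strongly Lipschitz domain. This I would resolve by an exhaustion argument: choose subdomains $\Omega_m\nearrow\Omega$ with $\overline{\Omega_m}\subset\Omega$ and orientable strongly Lipschitz boundaries (for instance inward normal deformations of $\partial\Omega$, which exist for strongly Lipschitz boundaries), apply Proposition~\ref{cauchy1} on each $\Omega_m$, and pass to the limit; continuity of $f$ on $\overline{\Omega}$ and the convergence of the approximating boundary traces allow moving the limit inside the integral by dominated convergence, since for fixed interior $y$ the kernel $q_{\bf 0}(\cdot-y)$ is bounded on $\partial\Omega$. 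Once this boundary Cauchy representation is in hand, the remainder of the argument is routine.
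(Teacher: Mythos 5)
Your argument is correct, and it is built on the same two pillars as the paper's own proof: the octonionic Cauchy integral formula (Proposition~\ref{cauchy1}) and the norm composition property $|ab|=|a|\,|b|$, which together reduce the point-evaluation bound to an estimate on real-valued integrals. The execution differs in one genuine respect. The paper never estimates the Cauchy kernel on $\partial\Omega$: it rewrites $f(x)$ as the Cauchy integral over a small sphere $\partial B_8(x,R)$ with $\overline{B_8(x,R)}\subset\Omega$, where $|q_{\bf 0}(y-x)|\equiv R^{-7}$ is constant, and then invokes without proof the domination $\int_{\partial B_8(x,R)}|f|^2\,dS\le\|f\|^2_{L^2(\partial\Omega)}$ --- a nontrivial Hardy-space fact resting on subharmonicity of $|f|^2$ and harmonic majorization. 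You instead estimate directly on $\partial\Omega$, trading that step for the $L^2(\partial\Omega)$-integrability of $|z-y|^{-14}$, which you verify correctly, including the tail bound on an unbounded Lipschitz surface of polynomially growing measure. Note that your Cauchy--Schwarz is the scalar one in $L^2(\partial\Omega,dS;\mathbb{R})$, applied \emph{after} the norm has been taken inside the integral, so it is untouched by the paper's caveat that a Cauchy--Schwarz inequality for the octonion-valued inner product is unavailable; the paper itself tacitly uses the same scalar inequality on the sphere. Your extra care about applying the Cauchy formula to functions merely continuous up to a possibly unbounded Lipschitz boundary, handled by exhaustion, addresses a point the paper passes over silently. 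In short: same key idea, but your version makes explicit (and in one place repairs) steps the paper only asserts, at the cost of needing the kernel integrability on $\partial\Omega$ rather than on an interior sphere.
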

\begin{proof}
Suppose that $\Omega \subset \mathbb{O}$ is an arbitrary bounded or unbounded orientable domain with a sufficiently smooth boundary and let $x \in \Omega$. Let $B_8(x,R)$ be the eight-dimensional open ball centered at $x$ with radius $R$ where one chooses $R > 0$ such that the solid ball $\overline{B_8(x,R)} \subset \Omega$. Then, relying on the version of the octonionic Cauchy integral given in Proposition~\ref{cauchy1} we get that 
\begin{eqnarray*}
|f(x)|^2 &=& \Bigg|  \frac{3}{\pi^4} \int\limits_{\partial \Omega} q_{\bf 0}(y-x) \; (n(y) f(y)) dS(y)\Bigg|^2\\
 & = & \Bigg|  \frac{3}{\pi^4} \int\limits_{\partial B_8(x,R)} q_{\bf 0}(y-x) \; (n(y) f(y)) dS(y)\Bigg|^2.
\end{eqnarray*}
Note that due to the absence of the $\mathbb{O}$-linearity we cannot simply apply the inequality of Cauchy-Schwarz like proposed in {\rm \cite{BDS}}. However, we may rely on Cauchy's integral formula and the norm property of the octonions which luckily ensure that     
\begin{eqnarray*}
|f(x)|^2 & \le & const(B_8(x,R)) \frac{3}{\pi^4} \int\limits_{\partial B_8(x,R)} |f(y)|^2 dS(y)\\
         & \le & const(B_8(x,R)) \|f\|^2_{L^2(\partial \Omega)},
\end{eqnarray*}  
where $const(B_8(x,R))$ is a constant which just depends on the domain. 
Hence, we indeed have a continuous point evaluation as a consequence of the validity of the octonionic Cauchy integral formula. 
\end{proof}
However, due to the absence of a direct analogue of a Fischer-Riesz representation theorem, we cannot directly use the previously established property to guarantee the existence of a reproducing kernel function.  
\par\medskip\par
One possible way to overcome this problem is to consider first the real-valued inner product defined by 
$$
 \langle f,g \rangle_0 := \Re\{(f,g)\}.
$$
Using this real-valued inner product instead one can apply all the well-known theorems and properties of a classical reproducing kernel Hilbert space. See also \cite{Cnops1996} where a similar idea has been applied to the Clifford analysis case.  The Clifford case however was easier to handle because the associativity permitted a complete equivalent treatment of the corresponding function spaces. Using the real-valued inner product or the Clifford-valued inner product is equivalent due to linearity and associativity. In the octonionic case we have to be more careful.  

In view of the property (\ref{scalarprod}) $\langle ab,c \rangle = \langle b, \overline{a}c\rangle$ for all $a,b,c \in \mathbb{O}$ we have an invariance of that real-valued inner product of the form $ \langle nf,ng \rangle_0$ for any $n$ with $|n|=1$. This is due to $n \overline{n} = 1$ and a consequence of the property that $[n,\overline{n},a] = 0$ for any $a,n \in \mathbb{O}$. So, the real-valued inner product can equivalently be rewritten in the form 
$$
\langle f,g \rangle_0 = \int\limits_{\partial \Omega} \Re\Big\{ \overline{g(x)} f(x)  \Big\} dS(x).
$$
The property (\ref{scalarprod}) also provides us with the necessary compatibility condition to define for any  octonionic $\mathbb{R}$-linear functional ${\cal{T}}$ a uniquely defined adjoint ${\cal{T}}^*$ such that $ \langle {\cal{T}}f,g \rangle_0 =  \langle f,{\cal{T}}^*g\rangle_0$. 
In particular, when equipping $H^2(\partial \Omega,\mathbb{O})$ now with this real-valued inner product $ \langle \cdot,\cdot \rangle_0$, one indeed always gets a uniquely defined reproducing kernel ${{S_0}}_x: y \mapsto {{S_0}}_x(y):={S_0}(x,y)$ in $H^2(\partial \Omega)$ called the octonionic Szeg\"o kernel with respect to $\langle \cdot , \cdot \rangle_0$. It reproduces the harmonic real part $f_0$ of an octonionic monogenic function $f$ 
$$
[{\cal{S}}_0 f](x) :=  \langle f,{{S_0}}_x \rangle_0 = f_0(x) \quad \quad \forall f \in H^2(\partial \Omega, \mathbb{O}).
$$  
Let us now investigate the reproduction behavior of the other components of an octonion-valued function $f(x) = f_0(x) + f_1(x) e_1 + \cdots + f_7(x) e_7$ belonging to $H^2(\partial \Omega)$. Here $f_i$ always denotes the real component of the part of $f$ belonging to $e_i$ also called the $i$-part of $f$. If $f \in H^2(\partial \Omega)$, then all the real components $f_i$ are harmonic functions from $\mathbb{O}$ to $\mathbb{R}$.  
Similarly, we define for all $i = 1,\ldots,7$ the real-valued inner products 
$$ \langle f,g \rangle_i := \{(f,g)\}_i = \int\limits_{\partial \Omega} \{(\overline{n(y)\;g(y)})\;(n(y)\;f(y))   \}_i dS(y),
$$ 
referring to the $i$-part of the octonionic expression $(f,g)$. For each $i=1,\ldots,7$ there exists a unique reproducing kernel ${{S_i}}_x: y \mapsto {{S_i}}_x(y):={{S_i}}(x,y)$ in $H^2(\partial\Omega)$ that exactly reproduces the harmonic $i$-part $f_i$ of an octonionic monogenic function $f \in H^2(\partial \Omega)$ - now with respect to the inner product $ \langle \cdot,\cdot \rangle_i$, i.e.:
$$
[{\cal{S}}_i f](x) :=  \langle f,{{S_i}}_x \rangle_i = f_i(x) \quad \quad \forall f \in H^2(\partial \Omega, \mathbb{O}). 
$$
Now we can introduce a total octonionic monogenic Szeg\"o projection by 
\begin{equation}\label{monszego}
[{\cal{S}} f] := \sum\limits_{i=0}^7 \langle f , S_i \rangle_i e_i.
\end{equation}
Per construction it satisfies $[{\cal{S}} f](x) = \sum\limits_{i=0}^7 f_i(x) e_i = f(x)$ for all $f\in H^2(\partial\Omega)$. The projection ${\cal{S}}_0 f$ is exactly its real part. An open question is to ask: When is it possible to represent the total Szeg\"o projection 
${\cal{S}}$ in a global form of the way  
\begin{equation}\label{globalform}
[{\cal{S}}f](x) := \frac{3}{\pi^4} \int\limits_{\partial \Omega}(\overline{n(y) S(x,y)})\; (n(y)f(y)) dS(y)?
\end{equation}
At this point we wish to recall that actually in the particular cases where $\Omega$ is the unit ball, the half-space or a strip domain bounded in the real direction we can in fact write ${\cal{S}}f$ can in such a global form involving an explicit kernel function $S(x,y)$ reproducing all $f \in H^2(\partial \Omega)$ as in all detail mentioned above. 

However, the projection ${\cal{S}}$ does not always have the usual properties that we are used to observe from the classical Szeg\"o projection. Thus, in the general case one has to reproduce componentwisely by using the real-valued inner products instead.    
\par\medskip\par
Note that in view of the Hermitian property one has the symmetric relation $$[{{S_i}}(y,x)]_i = [{{S_i}}_y(x)]i =  \langle {{S_i}}_y,{{S_i}}_x\rangle_i = \overline{ \langle {{S_i}}_x,{{S_i}}_y \rangle_i} = \overline{[{{S_i}}_x(y)]_i} = \overline{[{{S_i}}(x,y)]_i}$$ for all $i$-parts from $i=0,\ldots,7$.  
\par\medskip\par
A simple but very important property is the following 
\begin{corollary}
The total Szeg\"o projection defined in (\ref{monszego}) is an $\mathbb{R}$-linear operator, satisfying 
$$
{\cal{S}}[\alpha f + \beta g]=\alpha {\cal{S}} [f] + \beta {\cal{S}} [g]
$$
for all octonion-valued $f,g \in L^2(\partial \Omega)$ and all $\alpha,\beta \in \mathbb{R}$. 
\end{corollary}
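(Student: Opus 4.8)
The plan is to reduce the claim to the $\mathbb{R}$-linearity of the componentwise inner products $\langle\cdot,\cdot\rangle_i$ together with the observation that the reproducing kernels do not depend on the function being projected. First I would recall that the octonion-valued inner product $(\cdot,\cdot)$ is, by the remark made immediately after its definition, $\mathbb{R}$-linear in its first argument: $(\alpha f+\beta g,h)=\alpha(f,h)+\beta(g,h)$ for all $\alpha,\beta\in\mathbb{R}$. Next, for each fixed $i\in\{0,\dots,7\}$ the operation $a\mapsto\{a\}_i$ of extracting the real coefficient of $e_i$ is an $\mathbb{R}$-linear functional $\mathbb{O}\to\mathbb{R}$, so composing it with $(\cdot,\cdot)$ shows that $\langle\cdot,\cdot\rangle_i=\{(\cdot,\cdot)\}_i$ is $\mathbb{R}$-linear in its first slot as well. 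Since, for any point $x\in\Omega$, the kernel ${{S_i}}_x\in H^2(\partial\Omega)$ is a fixed element independent of the function being projected, this gives
$$
\langle\alpha f+\beta g,{{S_i}}_x\rangle_i=\alpha\,\langle f,{{S_i}}_x\rangle_i+\beta\,\langle g,{{S_i}}_x\rangle_i
$$
for all $f,g\in L^2(\partial\Omega)$ and $\alpha,\beta\in\mathbb{R}$.

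It then remains to assemble the components. Each $\langle f,{{S_i}}_x\rangle_i$ is a real number (it is the $i$-part of an octonion, and in fact equals $f_i(x)$ by the reproducing property), so multiplication by $e_i$ is just real-scalar multiplication of the basis unit $e_i$; in particular it distributes over real linear combinations and involves no associator. Hence
$$
[{\cal{S}}(\alpha f+\beta g)](x)=\sum_{i=0}^7\langle\alpha f+\beta g,{{S_i}}_x\rangle_i\,e_i=\sum_{i=0}^7\big(\alpha\langle f,{{S_i}}_x\rangle_i+\beta\langle g,{{S_i}}_x\rangle_i\big)e_i=\alpha[{\cal{S}}f](x)+\beta[{\cal{S}}g](x),
$$
and since $x\in\Omega$ was arbitrary, ${\cal{S}}[\alpha f+\beta g]=\alpha{\cal{S}}[f]+\beta{\cal{S}}[g]$, as claimed.

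There is essentially no serious obstacle in this argument; it is a direct consequence of the definition (\ref{monszego}) and the $\mathbb{R}$-linearity noted earlier. The only point where the non-associative nature of $\mathbb{O}$ genuinely intervenes is a negative one, worth stating explicitly: one cannot replace $\alpha,\beta\in\mathbb{R}$ by octonionic scalars. Indeed, $\langle\cdot,\cdot\rangle_i$ is not $\mathbb{O}$-linear, and moreover $(ab)e_i\neq a(be_i)$ in general, so both mechanisms that carry $\mathbb{R}$-linearity through break down for $\mathbb{O}$-linear combinations. Thus the corollary is optimal in asserting only $\mathbb{R}$-linearity.
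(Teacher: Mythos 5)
Your proof is correct and follows essentially the same route as the paper's own proof, namely expanding the definition (\ref{monszego}) and using the $\mathbb{R}$-linearity of each $\langle\cdot,\cdot\rangle_i$ in the first slot to pull out $\alpha$ and $\beta$ before reassembling the components. You simply spell out two steps the paper leaves implicit (why each $\langle\cdot,\cdot\rangle_i$ inherits $\mathbb{R}$-linearity from $(\cdot,\cdot)$, and why multiplying the resulting real coefficients by $e_i$ causes no associativity issues), which is a harmless elaboration.
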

\begin{proof}
By using the definition we directly may read off that 
\begin{eqnarray*}
{\cal{S}}[\alpha f + \beta g] &=& \sum\limits_{i=0}^7 \langle \alpha f, S_i\rangle _i e_i + \sum\limits_{i=0}^7
 \langle \beta g, S_i\rangle_i e_i \\
&=& \alpha \sum\limits_{i=0}^7 \langle f, S_i\rangle_i e_i + \beta \sum\limits_{i=0}^7 \langle g, S_i \rangle_i e_i \\
&=& {\cal{S}}[f] + \beta {\cal{S}}[g].
\end{eqnarray*}
\end{proof}
\par\medskip\par
Let us now analyze whether one can associate with the Szeg\"o projection some notions of orthogonality. 
 
We first may observe that even in the context of the octonion-valued inner product $(\cdot,\cdot)$ there exists a uniquely defined projection ${\cal{P}}: L^2(\partial \Omega) \to H^2(\partial \Omega, \mathbb{O})$ such that $(f-{\cal{P}}f,g) = 0$ for all $f \in L^2(\partial \Omega)$ and $g \in H^2(\partial \Omega)$. Since $(f+h,g)=(f,g)+(h,g)$ for all $f,h \in L^2(\partial\Omega)$ and $g \in H^2(\partial \Omega)$  one has that $(f-{\cal{P}}f,g) = 0$ if and only if $(f,g)=({\cal{P}}f,g)$. However, the absence of a direct analogue of the Fischer-Riesz representation theorem does not immediately guarantee us the possibility to express the projection ${\cal{P}}$ in terms of a global Szeg\"o kernel. However, the consideration of the real-valued inner products $\langle \cdot, \cdot \rangle_i$ allow us link the Szeg\"o projection with the notion of orthogonality and self-adjointness in the context of $\langle \cdot, \cdot \rangle_i$. 

A simple but rather important consequence of the preceding corollary is the following  
\begin{corollary}
The Szeg\"o operator ${\cal{S}}$ possesses an $\mathbb{R}$-linear adjoint operator operator ${\cal{S}}^*$ such that $\langle {\cal{S}} f,g\rangle_0 = \langle f,{\cal{S}}^*g\rangle_0$ for all $f,g \in L^2(\partial \Omega)$. (Similarly, for the other $i=1,\ldots,7$ there are $\mathbb{R}$-linear adjoint operators ${\cal{S}}^{[i*]})$ satisfying $\langle {\cal{S}} f,g \rangle_i = \langle f , {\cal{S}}^{[*i]}g\rangle_i$ for all $f,g \in L^2(\partial \Omega)$. 
\end{corollary}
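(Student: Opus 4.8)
The plan is to pass from the $\mathbb{R}$-linear octonion-valued inner product $(\cdot,\cdot)$ to its scalar part $\langle\cdot,\cdot\rangle_0=\Re\{(\cdot,\cdot)\}$, which is a genuine positive-definite real inner product making $L^2(\partial\Omega,\mathbb{O})$ a real Hilbert space (it is nothing but $L^2(\partial\Omega;\mathbb{R}^8)$), so that the Riesz representation theorem applies. By the preceding Corollary the operator ${\cal S}$ is $\mathbb{R}$-linear, and the octonionic rule $\langle ab,c\rangle=\langle b,\overline a c\rangle$ from (\ref{scalarprod}) — precisely the compatibility condition isolated in the paragraph before the statement — is what allows the adjoint to be realized again as an $\mathbb{O}$-valued operator. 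Granting that ${\cal S}$ is a \emph{bounded} operator on $L^2(\partial\Omega,\mathbb{O})$, I would argue: for each fixed $g$ the map $f\mapsto\langle {\cal S}f,g\rangle_0$ is a bounded $\mathbb{R}$-linear functional on $L^2(\partial\Omega,\mathbb{O})$, so Riesz yields a unique element ${\cal S}^*g$ with $\langle {\cal S}f,g\rangle_0=\langle f,{\cal S}^*g\rangle_0$ for all $f$, and uniqueness forces $g\mapsto{\cal S}^*g$ to be $\mathbb{R}$-linear. This gives the first assertion.

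For the boundedness of ${\cal S}$ I would build on the preceding Proposition: the continuity of the point evaluations makes $H^2(\partial\Omega,\mathbb{O})$ a reproducing-kernel real Hilbert space for $\langle\cdot,\cdot\rangle_0$, and since ${\cal S}$ reproduces every element of $H^2(\partial\Omega,\mathbb{O})$ and takes values in this closed subspace, a standard reproducing-kernel estimate gives $\|{\cal S}f\|_{L^2(\partial\Omega)}\le C\|f\|_{L^2(\partial\Omega)}$. Componentwise, ${\cal S}_0$ is the orthogonal $\langle\cdot,\cdot\rangle_0$-projection onto $H^2(\partial\Omega,\mathbb{O})$ followed by the contraction ``take the real part'', and the $i$-components are handled in the same way once $\langle\cdot,\cdot\rangle_i$ is rewritten through $\langle\cdot,\cdot\rangle_0$ as in the next paragraph.

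For the $i$-part statements one cannot reason directly, because $\langle\cdot,\cdot\rangle_i$ is only an antisymmetric $\mathbb{R}$-bilinear form — indeed $\langle f,f\rangle_i=\{(f,f)\}_i=0$ since $(f,f)=\|f\|_{L^2}^2$ is real-valued. Instead I would reduce everything to the $\langle\cdot,\cdot\rangle_0$-case by a conjugation trick. For $i=1,\dots,7$ define the $\mathbb{R}$-linear operator $T_i$ on $L^2(\partial\Omega,\mathbb{O})$ by $(T_ig)(x):=\overline{n(x)}\bigl((n(x)g(x))e_i\bigr)$. Using $|n(x)|=1$ a.e., two applications of (\ref{scalarprod}), the identity (\ref{dieckmann}) in the form $n(\overline n\,a)=|n|^2a$, and the alternative law $(a\,e_i)e_i=-a$, one obtains the two identities $\langle f,g\rangle_i=\langle f,T_ig\rangle_0$ and $T_i^2=-\mathrm{id}$; in particular $|T_ig|=|g|$ pointwise, $T_i$ is invertible with $T_i^{-1}=-T_i$, and by the anti-symmetry of $\langle\cdot,\cdot\rangle_i$ it is skew-adjoint for $\langle\cdot,\cdot\rangle_0$. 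Setting ${\cal S}^{[i*]}:=-T_i{\cal S}^*T_i$, which is again $\mathbb{R}$-linear, one verifies for all $f,g$ that
$$
\langle f,{\cal S}^{[i*]}g\rangle_i=\langle f,T_i(-T_i{\cal S}^*T_ig)\rangle_0=\langle f,-T_i^2({\cal S}^*T_ig)\rangle_0=\langle f,{\cal S}^*T_ig\rangle_0=\langle {\cal S}f,T_ig\rangle_0=\langle {\cal S}f,g\rangle_i,
$$
which is the required intertwining.

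The routine part — Riesz representation and the $\mathbb{R}$-linearity of the adjoints — is immediate, so the genuine work lies in two places. The main obstacle is the boundedness of ${\cal S}$ on all of $L^2(\partial\Omega,\mathbb{O})$: the preceding Proposition only delivers continuity of the point evaluations at interior points, and upgrading this to the global bound $\|{\cal S}f\|_{L^2(\partial\Omega)}\le C\|f\|_{L^2(\partial\Omega)}$ requires using that $H^2(\partial\Omega,\mathbb{O})$ is closed in $L^2(\partial\Omega,\mathbb{O})$ and that ${\cal S}$ acts as the identity on it. The second, more technical, point is the non-associative bookkeeping needed for $\langle f,g\rangle_i=\langle f,T_ig\rangle_0$ and $T_i^2=-\mathrm{id}$: every triple product appearing there must be resolved only by means of (\ref{scalarprod}), (\ref{dieckmann}), the flexibility law and the alternative law — never by an illegitimate reassociation — which is the step most prone to error.
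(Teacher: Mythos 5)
Your proof is correct, and for the component $i=0$ it follows essentially the route the paper intends: the paper gives no written proof, presenting the statement as an immediate consequence of the $\mathbb{R}$-linearity of ${\cal{S}}$ together with its earlier remark that the compatibility rule (\ref{scalarprod}) and the Fischer--Riesz representation theorem for the real inner product $\langle\cdot,\cdot\rangle_0$ furnish a unique $\mathbb{R}$-linear adjoint for any bounded $\mathbb{R}$-linear operator. Where you genuinely depart from the paper is in the treatment of $i=1,\ldots,7$: you correctly observe that $\langle\cdot,\cdot\rangle_i$ is an antisymmetric, degenerate $\mathbb{R}$-bilinear form (indeed $\langle f,f\rangle_i=0$ because $(f,f)=\|f\|_{L^2}^2$ is real), so the Riesz theorem cannot be invoked for it directly --- a subtlety the paper passes over by treating all eight forms on the same footing. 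Your reduction $\langle f,g\rangle_i=\langle f,T_ig\rangle_0$ with $T_ig=\overline{n}\bigl((ng)e_i\bigr)$ and $T_i^2=-\mathrm{id}$, followed by ${\cal{S}}^{[i*]}:=-T_i{\cal{S}}^*T_i$, is a clean repair of this gap; every reassociation you need is indeed covered by (\ref{scalarprod}), (\ref{dieckmann}) and right alternativity, and since $T_i$ is a pointwise isometry the resulting operator is bounded. What each approach buys: the paper's implicit argument is shorter but only literally valid for $i=0$, while yours actually delivers the parenthetical claim for $i\neq 0$. You are also right to single out the $L^2(\partial\Omega)$-boundedness of the total projection ${\cal{S}}$ as the one hypothesis neither you nor the paper fully establishes: the preceding proposition only gives continuity of interior point evaluations, and upgrading this to $\|{\cal{S}}f\|_{L^2(\partial\Omega)}\le C\|f\|_{L^2(\partial\Omega)}$ for arbitrary $f\in L^2(\partial\Omega)$ (immediate for ${\cal{S}}_0=\Re\circ P$ with $P$ the $\langle\cdot,\cdot\rangle_0$-orthogonal projection onto $H^2$, but not for the $i$-components, where $T_i{S_i}_x$ need not lie in $H^2$) is only gestured at in your sketch.
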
 
In the setting of the real-valued inner product $\langle f , g \rangle_0$ we can talk about self-adjointness of an $\mathbb{R}$-linear octonionic integral operator ${\cal{T}}$ when $\langle {\cal{T}} f , g \rangle_0 = \langle f , {\cal{T}} g \rangle_0$.  
Let us now assume that ${\cal{T}}f$ is representable with a global kernel function $k(x,y)$. Self-adjointness w.r.t. $\langle \cdot, \cdot \rangle_0$ happens when the associated kernel function of ${\cal{T}}^*$ is the conjugate of the kernel function of ${\cal{T}}$ in view of the compatibility relation $\langle u v,w\rangle = \langle v , \overline{u}w\rangle$. Here, the usual results from the theory of Hilbert spaces can be applied, and we may talk about an orthogonal projector.  

Let us next assume that the Szeg\"o projection ${\cal{S}}$ is representable with a kernel function $S(x,y)$ --- we know that this is at least true for the three cases of the unit ball, the half-space and the strip domain. Then the associated adjoint ${\cal{S}}^*$ (w.r.t. $\langle \cdot, \cdot \rangle_0$)  is represented in terms of the conjugate of the kernel, i.e. $\overline{S(y,x)}$. We have ${\cal{S}}^*={\cal{S}}$ if and only if $\overline{S(y,x)} = S(x,y)$. This is true at least in the cases of $\Omega = B_8(0,1)$, $\Omega = H^{+}$ or $\Omega = \{z \in \mathbb{O} \mid 0 < \Re(z) < d\}$. There, we know for sure that ${\cal{S}}$ is self-adjoint w.r.t. $\langle \cdot, \cdot \rangle_0$.

However, one has to be extremely careful concerning the introduction of a notion of self-adjointness when looking at the full octonionic inner product $(\cdot,\cdot)$.  

Suppose now that we also would have the relation $({\cal{S}}f,g)=(f,{\cal{S}}g)$ for all octonionic valued functions $f,g$ belonging to $L^2(\partial \Omega)$. Assume further that we could represent ${\cal{S}}$ in a global form of the way (\ref{globalform}) which a global octonionic monogenic Szeg\"o kernel $S(x,y)$. 

That would mean that despite of the non-associativity we always would have the property that 
\begin{equation}
\label{adjointrelationszego}
\int\limits_{\partial \Omega} \overline{(n(x)g(x))} \; (n(x){\cal{S}}f(x)) dS(x) = 
\int\limits_{\partial \Omega} \overline{(n(x) {\cal{S}}g(x))}\; (n(x)f(x)) dS(x).
\end{equation}  
In turn, this would imply that for any $f,g \in L^2(\partial \Omega)$ one would get the identity  
\begin{eqnarray*}
& & \int\limits_{\partial \Omega} (\overline{n(x)g(x)}) \; \Bigg(n(x) \; \Bigg[     
\int\limits_{\partial \Omega}(\overline{n(y)S(x,y)})(n(y)f(y)) dS(y)
\Bigg]\Bigg) dS(x)\\
 & \stackrel{(\ref{adjointrelationszego})}{=} & \int\limits_{\partial \Omega}(\overline{{\cal{S}}g(x)} \;  \overline{n(x)})\; (n(x)f(x)) dS(x)  \\
&=& \int\limits_{\partial \Omega} \Bigg( \Bigg[\overline{\int\limits_{\partial\Omega}(\overline{n(y)S(x,y)})        \;(n(y)g(y)) dS(y)}\Bigg]\overline{n(x)} \Bigg)\; (n(x)f(x)) dS(x).
\end{eqnarray*}
So, in particular for the half-space case where $n(x)=-1$ and where we know that ${\cal{S}}$ is self-adjoint w.r.t. $\langle \cdot, \cdot \rangle_0$ one would get that 
$$
\int\limits_{\mathbb{R}^7} \overline{g(x)} \; [{\cal{S}}f](x) dx_1 \cdots dx_7 
= \int\limits_{\mathbb{R}^7} \overline{[{\cal{S}}g](x)} \; f(x) dx_1 \cdots dx_7.
$$
This means in detail that one would get for any $f$ and $g$ the relation
\begin{eqnarray*}
 & & \int\limits_{\mathbb{R}^7} \overline{g(x)} \Bigg(\int\limits_{\mathbb{R}^7} \overline{S(x,y)} f(y) dy_1 \cdots dy_7 \Bigg) dx_1 \cdots dx_7\\
&=& \int\limits_{\mathbb{R}^7} \Bigg(\overline{\int\limits_{\mathbb{R}^7} \overline{S(x,y)} g(y) dy_1 \cdots dy_7    }\Bigg) f(x) dx_1 \cdots dx_7\\
&=& \int\limits_{\mathbb{R}^7} \Bigg(\int\limits_{\mathbb{R}^7} \overline{g(y)}S(x,y) dy_1 \cdots dy_7   \Bigg) f(x) dx_1 \cdots dx_7\\
&=& \int\limits_{\mathbb{R}^7} \Bigg(\int\limits_{\mathbb{R}^7} \overline{g(y)} \; \overline{S(y,x)} dy_1 \cdots dy_7    \Bigg) f(x) dx_1 \cdots dx_7.
\end{eqnarray*}
However, note that we do not have in general a termwise associativity. Therefore, such a relation could not be expected in such generality.

\subsection{The octonionic Cauchy projection revisited in the context of these inner products}
Closely related to the Szeg\"o projection there is also the Cauchy projection induced by the octonionic Cauchy integral formula. Suppose that $f \in L^2(\partial \Omega)$. Then the octonionic Cauchy projection
\begin{eqnarray*}
[{\cal{C}} f](x) &:=& \frac{3}{\pi^4} \int\limits_{\partial \Omega} q_{\bf 0}(y-x) \; (d\sigma(y) f(y)) \\
&=& \frac{3}{\pi^4} \int\limits_{\partial \Omega} q_{\bf 0}(y-x) \; (n(y) f(y)) dS(y)
\end{eqnarray*}
sends an $L^2(\partial \Omega)$-function to a function belonging to $H^2(\partial \Omega,\mathbb{O})$ for any $x \in \Omega$, cf. \cite{XLT2008}, Theorem~2.2.  
 
The octonionic Cauchy projection can easily be re-written in terms of the octonionic inner product defined at the beginning of the previous subsection in the following form 
$$
[{\cal{C}}f](x) = (f,g_x) = \frac{3}{\pi^4} \int\limits_{\partial \Omega} (\overline{n(y) g_x(y)})\; (n(y)f(y))dS(y)
$$
where we identify $\overline{n(y) g_x(y)} = q_{\bf 0}(y-x) = \frac{\overline{y-x}}{|y-x|^8}$. 

Thus, $n(y) g_x(y) = \frac{y-x}{|y-x|^8}$ from which we may read off that 
$$
g_x(y) = \overline{n(y)} \frac{y-x}{|y-x|^8}. 
$$ 
In this notation and using the special octonion-valued inner product the well-known octonionic Cauchy integral formula can be re-expressed in the form 
$$
(f,g_x) = f
$$
if $f$ is left octonionic monogenic. 
\par\medskip\par
Let us fix some notation. Suppose that $\mu$ is a real number satisfying $0 < \mu < 1$. Let us denote the set of octonion-valued functions that are $\mu$-h\"oldercontinuous over $\partial \Omega$ by $C^{\mu}(\partial \Omega)$, similarly as in complex analysis.  
Next, following the paper \cite{XLT2008}~Theorem 2.2, also in line with the results from classical complex and Clifford analysis, for any $f \in C^{\mu}(\partial \Omega)$ the octonionic Cauchy transform can be extended to the boundary by defining
$$
[{\cal{C}}f](x) = \frac{1}{2} f(x) + \frac{3}{\pi^4} p.v. \int\limits_{\partial \Omega} q_{\bf 0}(y-x) \; (d\sigma(y) f(y))
$$
where 
$$
p.v. \int\limits_{\partial \Omega} q_{\bf 0}(y-x) \; (d\sigma(y) f(y)) := \lim\limits_{\varepsilon \to 0^+} \int\limits_{\partial \Omega,\;|x-y|\ge \varepsilon} q_{\bf 0}(y-x) \; (d\sigma(y) f(y))
$$ 
is the Cauchy principal value. 

The second term represents the octonionic monogenic generalized Hilbert transform which will be denoted by 
$$
[{\cal{H}}f](x) := 2 p.v. \frac{3}{\pi^4} \int\limits_{\partial \Omega} q_{\bf 0}(y-x) \; (d\sigma(y) f(y)).
$$
Equivalently, writing 
$$
[{\cal{P}}_{\pm} f](x) = \pm \lim\limits_{\delta \to 0^+} \frac{3}{\pi^4} \int\limits_{\partial \Omega} q_{\bf 0}(y-x\pm \delta) \; (d\sigma(y) f(y)),
$$
one deals with the Plemelj projectors 
$$
{\cal{P}}_+ = \frac{1}{2} ({\cal{H}} + {\cal{I}}),\quad\quad {\cal{P}}_- = \frac{1}{2} (-{\cal{H}} + {\cal{I}})
$$
where ${\cal{I}}$ is the identity operator acting in the way ${\cal{I}}f = f$. One obtains the Plemelj projection formulas ${\cal{P}}_+  + {\cal{P}}_- = {\cal{I}}$ and ${\cal{P}}_+ - {\cal{P}}_- = {\cal{H}}$. 

The extended octonionic monogenic Cauchy transform ${\cal{C}}: C^{\mu}(\partial \Omega) \to H^2(\partial \Omega, \mathbb{O})$ satisfies like in the complex case ${\cal{C}}^2 = {\cal{C}}$. Let $f \in C^{\mu}(\partial \Omega)$. Then $g :={\cal{C}} [f] \in H^2(\partial \Omega, \mathbb{O})$. Now, also the octonionic calculation rules allow us to conclude that $[{\cal{C}}^2]f = {\cal{C}}[{\cal{C}}[f]] = {\cal{C}}[g]=g = {\cal{C}}[f]$.   

Furthermore, one has $\|{\cal{H}}f\|_{L_2} \le c \|f\|_{L_2}$ with a real positive constant $c$, since $C^{\mu}(\partial \Omega)$ is dense in $L^2(\partial \Omega)$. Consequently, $\|{\cal{C}} f\|_{L_2} \le (\frac{1}{2}+c)\|f\|_{L_2}$. Therefore ${\cal{H}}$ and ${\cal{C}}$ are both $L^2$-bounded operators. 

\begin{remark}
In contrast to Clifford analysis, the octonionic Cauchy transform is only $\mathbb{R}$-linear and not $\mathbb{O}$-linear in general. Due to the lack of associativity, in general $[{\cal{C}}(f \alpha)] \neq [{\cal{C}}f] \alpha$ if $\alpha \not\in \mathbb{R}$, because
$$
q_{\bf 0}(y-x) \; \Bigg( d\sigma(y) \; (f(y) \; \alpha)\Bigg) \neq \Bigg(q_{\bf 0}(y-x) \; ( d\sigma(y) \; f(y))\Bigg)\; \alpha. 
$$
We only have ${\cal{C}}[f\alpha + g \beta] = [{\cal{C}}f]\alpha + [{\cal{C}}g] \beta$ for real $\alpha,\beta$. However, this property is sufficient to guarantee the existence of a unique adjoint ${\cal{C}}^*$ satisfying $\langle {\cal{C}}f , g \rangle_0 = \langle f, {\cal{C}}^{*}g\rangle_0$ for all $f,g \in C^{\mu}(\partial \Omega)$. This is a consequence of the Fischer-Riesz representation theorem which can be applied in the context of a real-valued inner product, for instance for $\langle \cdot, \cdot,\rangle_0$. 
\end{remark}

In the notation of our previously defined octonion-valued inner product on $L^2(\partial \Omega)$, the extended octonionic Cauchy transform can be re-expressed in the form
$$
[{\cal{C}} f](x) = \frac{1}{2}f(x) + p.v. \frac{3}{\pi^4} \int\limits_{\partial \Omega} (\overline{n(y) g_x(y)})\; (n(y)f(y))dS(y)
$$
where $g_x(y) = \overline{n(y)} \frac{y-x}{|y-x|^8}$. 

This representation gives us a hint how a meaningfully defined generalized dual octonionic monogenic Cauchy transform ${\cal{C}}^*$ on a dual-like function space could look like.  

Let us recall once more that in the case where $\Omega = B_8(0,1)$, the octonionic Cauchy transform ${\cal{C}}$ even coincides exactly with the global Szeg\"o projection ${\cal{S}}$ induced by the octonion-valued inner product $(f,S_x)$ where $S_x(y) = \frac{1-\overline{x}y}{|1-\overline{x}y|^8}$ is the uniquely defined octonionic monogenic Szeg\"o kernel of the unit ball. Remind also that whenever we wish to talk about self-adjointness and orthogonality we again have to switch to the real-valued inner products.

In the case of the unit ball (and only in this case) the octonionic Cauchy transform is self-adjoint in the sense of the real-valued inner product $\langle \cdot, \cdot \rangle_0$ in view of  ${\cal{C}}^* = {\cal{S}}^* = {\cal{S}} = {\cal{C}}$. In all the other cases, however, the octonionic Cauchy-transform is not self-adjoint, because it is not an orthogonal projector.    

Since ${\cal{C}}f = (f,g_x)$ it is suggestive to introduce an octonionic generalization of the dual Cauchy transform on the dual space in terms of the conjugated integral kernel $\overline{g_y(x)}$. Since $g_x(y) = \overline{n(y)} \frac{y-x}{|y-x|^8}$ we have $g_y(x) = \overline{n(x)} \frac{x-y}{|x-y|^8}$ and hence
$$
\overline{g_y(x)} =\frac{\overline{x-y}}{|x-y|^8} n(x).
$$
Thus, it is natural to define  
\begin{definition} (generalized dual octonionic Cauchy transform)\\
The generalized dual octonionic monogenic Cauchy transform is defined by  
$$
{\cal{C}}^*: C^{\mu}(\partial \Omega) \to L^2(\partial \Omega): \quad [{\cal{C}}^* f](x) = \frac{1}{2} f(x) + p.v. \frac{3}{\pi^4} \int\limits_{\partial \Omega} (\overline{n(y) \overline{g_y(x)}}) \; (n(y) f(y)) dS(y) = (f,\overline{g_y}). 
$$
\end{definition}
\begin{remark}
Due to the lack of a termwise associativity it is not clear on the basis of standard arguments whether one could expect a general relation of the form $({\cal{C}}f,g)=(f,{\cal{C}}^*g)$ for all h\"oldercontinuous octonion-valued functions $f,g$ defined over $\partial \Omega)$. The direct standard proof presented in {\rm \cite{Bell,Ca,PVL}} for the complex setting (resp. for the Clifford analysis setting) cannot be carried over since we cannot interchange the parenthesis due to the absence of associativity. However, it is rather easy to see that this relation at least holds for some particular cases where we have $f=g$. In the case where $\Omega$ is bounded one can simply take $f=g=1$. 
However, if we work with the real part of the inner product, then the existence and the uniqueness of the octonionic adjoint operator ${\cal{C}}^*$ is guaranteed by the Fischer-Riesz representation theorem and we can conclude that this integral kernel actually induces the adjoint Cauchy transform in all cases. In fact from $({\cal{C}} 1,1) = (1,{\cal{C}}^*1) = \overline{(1,{\cal{C}} 1)}$ it compulsively follows that the integral kernel of ${\cal{C}}^*$ must be the conjugate of the kernel of ${\cal{C}}$ in view of the compatibility formula $\langle uv,w \rangle = \langle v, \overline{u}w\rangle$.   
\end{remark} 
\begin{remark}
Since ${\cal{C}}$ is $\mathbb{R}$-linear, continuous and bounded, the same is true for the previously introduced dual transform. Since $C^{\mu}(\partial\Omega)$ is dense in $L^2(\partial \Omega)$ we have that 
$\|{\cal{C}} f\|_{L^2} = \|{\cal{C}}^{*} f\|_{L^2}$.
\end{remark}
\subsection{An octonionic Kerzman-Stein operator}
Now we are in position to define meaningfully
\begin{definition}(octonionic Kerzman-Stein kernel)\\
Let $\Omega \subset \mathbb{O}$ a domain with the above mentioned conditions. 
For all $x,y \in \partial \Omega \times \partial \Omega$ with ($x \neq y$) the octonionic Kerzman-Stein kernel is given by 
$$
A(x,y) := g_x(y) - \overline{g_y(x)} = \overline{n(y)} \frac{y-x}{|y-x|^8} - \frac{\overline{x-y}}{|y-x|^8} n(x).
$$
\end{definition}
In the special case where one has $g_x(y) = \overline{g_y(x)}$ one gets exactly that $A(x,y) \equiv 0$. We will see that this will exactly happen if and only if $\Omega = B_8(0,1)$,  
providing us with a complete analogy to the complex case, \cite{Bell}. Only in this situation the octonionic monogenic Cauchy transform turns out to satisfy ${\cal{C}}^* = {\cal{C}}$.   

The Kerzman-Stein kernel measures in a certain sense how much the domain $\Omega$ differs from the octonionic unit ball.  

We define the associated octonionic Kerzman-Stein operator ${\cal{A}}: L^2(\partial \Omega) \to L^2(\partial \Omega)$ by 
\begin{eqnarray*}
[{\cal{A}} f](x) &:=& (f,A_x)_{\partial \Omega}\\
								 & =& \frac{3}{\pi^4} \int\limits_{\partial \Omega}(\overline{n(y) A(x,y)}) \; (n(y)f(y))dS(y)\\
								&=& \frac{3}{\pi^4} \int\limits_{\partial \Omega} (\overline{A(x,y)}\;\overline{n(y)}) \; (n(y)f(y)) dS(y)
\end{eqnarray*}
Note that this is not a singular integral operator anymore and that it can be defined for all $f \in L^2(\partial\Omega)$. However, the two additive components are singular. That means that whenever we want to split these terms, then we again have to apply the Cauchy principal value and to work for instance in the subspace of $\mu$-h\"oldercontinuous functions: 
\begin{eqnarray*}
[{\cal{A}} f](x) &=&  \frac{3}{\pi^4} p.v. \int\limits_{\partial \Omega} (\overline{n(y) g_x(y)}) \; (n(y) f(y)) dS(y)\\
								&-& \frac{3}{\pi^4} p.v. \int\limits_{\partial \Omega}(\overline{n(y) \overline{g_y(x)}}) \; (n(y) f(y)) dS(y)\\
								& = & \frac{3}{\pi^4} p.v. \int\limits_{\partial \Omega} (\overline{n(y) g_x(y)}) \; (n(y) f(y)) dS(y) + \frac{1}{2} f(y)\\
&-& \frac{3}{\pi^4} p.v. \int\limits_{\partial \Omega}(\overline{n(y) \overline{g_y(x)}}) \; (n(y) f(y)) dS(y)	- \frac{1}{2} f(x). 							
\end{eqnarray*}
Since the octonions still offer the particular calculation rule $n(\overline{n} q) = (n \overline{n})q = q$, as we did explain in the preliminary section, the previous equation can be rewritten as 
\begin{eqnarray*}
[{\cal{A}} f](x) &=& \frac{1}{2}f(x) + p.v. \frac{3}{\pi^4} \int\limits_{\partial \Omega} q_{\bf 0}(y-x) \; (n(y)f(y)) dS(y)\\
&-& \frac{1}{2}f(x) - p.v. \frac{3}{\pi^4} \int\limits_{\partial \Omega} 
\Bigg[\Bigg(\overline{n(x)} \frac{x-y}{|x-y|^8}   \Bigg)\; \overline{n(y)}\Bigg] \; (n(y)f(y)) dS(y) \\
&=& [{\cal{C}} f](x) - [{\cal{C}}^{*} f](x). 
\end{eqnarray*}
\begin{remark}
Our octonionic Kerzman-Stein kernel satisfies 
$$
\overline{A(y,x)} = \frac{\overline{x-y}}{|x-y|^8} n(x) - \overline{n(y)} \frac{y-x}{|x-y|^8} = - A(x,y).
$$
for all $(x,y) \in \partial \Omega \times \partial \Omega$ with $x\neq y$. 
Also the octonionic Kerzman-Stein operator ${\cal{A}}$ is skew symmetric, i.e. ${\cal{A}}^* = - {\cal{A}}$. It is bounded since $\|{\cal{A}}\|_{L_2} \le \|{\cal{C}}\|_{L_2} + \|{\cal{C}}^{*}\|_{L_2} = 2 \|{\cal{C}}\|_{L_2} \le L \|f\|_{L_2}$ with a real $L > 0$. Since ${\cal{C}}$ and also ${\cal{C}}^*$ are $\mathbb{R}$-linear and continuous, ${\cal{A}}$ is a compact operator since it is $L^2(\partial \Omega)$-bounded. 
\end{remark}
\begin{remark}
Also in the octonionic setting one can write the octonionic Kerzman-Stein operator in terms of the Hilbert transform as 
$$
{\cal{A}} = \frac{1}{2} {\cal{H}} - \frac{1}{2} {\cal{H}}^* 
$$ 
where we identity 
$$
[{\cal{H}}^*f](x) := 2\frac{3}{\pi^4} p.v. \int\limits_{\partial \Omega}(\overline{n(y) \overline{g_y(x)}}) \; (n(y) f(y)) dS(y), \quad f \in C^{\mu}(\partial \Omega)
$$
with the formal analogue of the adjoint Hilbert transform, establishing an analogy to the Clifford analysis setting. Compare with {\rm \cite{BD,Delanghe}}. In fact, using the real-valued inner product one has $\langle {\cal{H}}f , g \rangle_0 = \langle f, {\cal{H}}^*g\rangle_0$ when working in the spaces of $\mu$-h\"oldercontinuous functions. 
\end{remark} 
Also in the octonionic case we have 
\begin{corollary} The octonionic Kerzman-Stein kernel vanishes identically if and only if the domain $\Omega$ is the octonionic unit ball.
\end{corollary}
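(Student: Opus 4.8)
The plan splits into the two implications; the direction ``$\Omega=B_{8}(0,1)\Rightarrow A\equiv0$'' is a short computation and the substance lies in the converse. For $\Omega=B_{8}(0,1)$ the outward unit normal at $y\in S_{7}$ is $n(y)=y$, so after clearing the positive factor $|y-x|^{-8}$ it suffices to check that $\overline{y}(y-x)+\overline{(y-x)}\,x$ vanishes on $S_{7}\times S_{7}$. Only single products and the composition-algebra identity $z\overline{z}=|z|^{2}$ enter, so by distributivity $\overline{y}(y-x)=1-\overline{y}x$ and $\overline{(y-x)}\,x=\overline{y}x-1$, and the sum is $0$. I would add the remark that the same two lines, with $n(y)=(y-p)/r$, give $r^{-1}\bigl(|y-p|^{2}-|x-p|^{2}\bigr)=0$ on every sphere $S_{7}(p,r)$, and with $n\equiv-1$ give $-2\,\Re(y-x)=0$ on $\partial H^{+}$; since $q_{\bf 0}$, $g_{x}$ and $A$ scale covariantly (by an irrelevant positive factor) under translations and positive dilations, ``the octonionic unit ball'' in the statement should be read as a normalized representative of the family of balls, and the genuine task of the converse is to show that $A\equiv0$ forces $\Sigma:=\partial\Omega$ to be a round sphere, with a hyperplane in the unbounded limit.

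For the converse, after clearing $|y-x|^{-8}$ the hypothesis reads $\overline{n(y)}\,(y-x)+\overline{(y-x)}\,n(x)=0$ for all $x\neq y$ on $\Sigma$. Taking real parts and using $\Re(\overline{a}b)=\langle a,b\rangle$ produces the classical chord identity $\langle n(x)+n(y),\,y-x\rangle=0$; subtracting it at two fixed base points $a,b\in\Sigma$ shows that $z\mapsto\langle n(z),b-a\rangle$ is the restriction to $\Sigma$ of an affine function of $z$, so unless $\Sigma$ lies in an affine hyperplane (the case $\Omega=H^{+}$, already dealt with) the Gauss map is affine, $n(z)=Lz+c$ on $\Sigma$ for fixed $L$ and $c\in\mathbb{O}$; in particular $\Sigma$ is smooth and $Dn_{z}\equiv L$. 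This real-part information alone is \emph{not} enough --- it is shared by circular cylinders --- and this is the crux of the matter: one must exploit the full octonion-valued identity. I would do so by fixing a point $x$, a unit tangent $v$, a unit-speed curve $\gamma$ in $\Sigma$ with $\gamma(0)=x$ and $\gamma'(0)=v$, setting $y=\gamma(\varepsilon)$ and expanding to order $\varepsilon^{2}$: the $O(\varepsilon)$ term is $2\langle n(x),v\rangle=0$ by tangency, the real part of the $O(\varepsilon^{2})$ term cancels automatically (it is $\tfrac{d}{d\varepsilon}\langle n\circ\gamma,\gamma'\rangle$ at $0$), and the surviving content is exactly
\[
\operatorname{Im}\bigl(\overline{Lv}\,v\bigr)=0\qquad\text{for every tangent vector }v.
\]

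A small piece of octonionic linear algebra then finishes the proof. The Weingarten map $L|_{T_{x}\Sigma}$ is self-adjoint; diagonalizing it as $Lv_{i}=\mu_{i}v_{i}$ in an orthonormal basis $v_{1},\dots,v_{7}$ of $T_{x}\Sigma$ and feeding $v=v_{i}+v_{j}$ ($i\neq j$) into the displayed condition gives $(\mu_{i}-\mu_{j})\,\overline{v_{i}}v_{j}\in\R$. But for orthonormal $v_{i},v_{j}$ the octonion $\overline{v_{i}}v_{j}$ has vanishing real part (since $\Re(\overline{v_{i}}v_{j})=\langle v_{i},v_{j}\rangle=0$) and unit norm, hence is a \emph{nonzero purely imaginary} octonion, so $\mu_{i}=\mu_{j}$; thus the constant operator $L$ is a real scalar $\mu$ on every tangent space, and, these spanning $\mathbb{O}$, $L=\mu\,\mathrm{Id}$. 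Then $n(z)=\mu z+c$ together with $|n(z)|=1$ gives $|z+c/\mu|=1/|\mu|$ on $\Sigma$, so the closed connected hypersurface $\Sigma$ must be the whole sphere $S_{7}(-c/\mu,1/|\mu|)$; hence $\Omega$ is a ball, normalized to $B_{8}(0,1)$ by the covariance noted above. To summarize, the main obstacle is recognizing that the real part of the Kerzman--Stein identity is too weak (it admits cylinders), which forces a genuinely second-order expansion of the \emph{octonion-valued} identity, together with the short argument about purely imaginary products $\overline{v_{i}}v_{j}$ that converts the resulting pointwise relation into umbilicity.
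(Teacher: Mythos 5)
Your forward computation is the same as the paper's. For the converse the paper does something much shorter: from $A\equiv 0$ it extracts the relation $\overline{n(y)}\,(y-x)=(\overline{x-y})\,n(x)$ and then simply cites Lemma~12 of \cite{PVL} (the Clifford-analytic analogue), remarking that the cited argument transfers because every expression involved is a product of only two octonions, so Artin's theorem neutralizes the non-associativity. You replace that citation by a self-contained differential-geometric argument: real parts give the chord condition $\langle n(x)+n(y),y-x\rangle=0$, hence an affine Gauss map $n(z)=Lz+c$; a second-order expansion of the full octonion-valued identity along curves yields $\operatorname{Im}(\overline{Lv}\,v)=0$ on tangent vectors; and the purely-imaginary-product argument forces umbilicity, hence a round sphere. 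What your route buys is transparency --- it isolates the genuinely octonionic step (the imaginary part of $\overline{Lv}\,v$) and explains why the scalar chord condition alone is too weak (cylinders satisfy it), neither of which is visible behind the paper's one-line citation; the cost is that you need enough boundary regularity to speak of the Weingarten map and second-order expansions, whereas the paper's hypotheses only give $n$ almost everywhere on a strongly Lipschitz boundary, so your "$\Sigma$ is smooth" step deserves a sentence of bootstrap justification. Your side remark that $A$ also vanishes for every ball $B_8(p,r)$ and on $\partial H^{+}$ is correct and worth keeping: the paper's own half-space theorem immediately afterwards exhibits the kernel $2(y_0-x_0)/|y-x|^8$, which is identically zero for $x,y$ on the boundary $x_0=y_0=0$, so the ``only if'' must indeed be read up to translation, dilation and the hyperplane limit, exactly as you say.
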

\begin{proof}
If $\Omega = B_8(0,1)$, then $n(x)=x$, $n(y)=y$ and $|x|^2=|y|^2=1$. Then $A(x,y)$ simplifies to 
$$
A(x,y) = \frac{\overline{y}(y-x)-(\overline{x}-\overline{y})x}{|y-x|^8} = \frac{|y|^2-\overline{y}x - |x|^2+\overline{y}x}{|y-x|^8}=0. 
$$
Conversely, if $A(x,y) \equiv 0$, then 
$$
\overline{n(y)}(y-x) = (\overline{x-y}) n(x).
$$
This relation however can only be true if $\Omega$ is the octonionic unit ball, cf. Lemma~12 of \cite{PVL}. The argument of \cite{PVL} can be used because the above mentioned expressions only consist of products of two octonions. Consequently, in view of Artin's theorem the lack of associativity does not affect the argumentation.  
\end{proof}
So, also in the octonionic case we have ${\cal{C}}f = {\cal{C}}^*f$ if and only if $\Omega$ is the unit ball.\par\medskip\par 
Another very special case represents again the setting where $\Omega$ is  the octonionic half-space $x_0 > 0$. Here, we have 
\begin{theorem}
If $\Omega = H^+(\mathbb{O})$, then the octonionic Kerzman-Stein operator represents the classical Hilbert-Riesz transform in the $x_0$-direction, i.e.
$$
[{\cal{A}} f](x) = 2 p.v. \int\limits_{\mathbb{R}^7} \frac{y_0-x_0}{|y-x|^8} f(y) dy_1 \cdots dy_7.
$$
\end{theorem}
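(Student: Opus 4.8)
The plan is to specialise the octonionic Kerzman--Stein operator to $\Omega=H^{+}(\mathbb{O})$, using that the boundary $\partial\Omega=\{x\in\mathbb{O}\mid x_0=0\}$, identified with $\mathbb{R}^7$ via $x=x_1e_1+\cdots+x_7e_7$, carries the \emph{constant} exterior unit normal $n(x)\equiv-1$, as recorded earlier for the half-space. Once this constant normal is inserted everywhere, all the octonionic products in the definition of ${\cal{A}}$ are built from at most two octonions together with real scalars, so by Artin's theorem they behave associatively and the whole computation reduces to the classical half-space one from complex and Clifford analysis.

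First I would compute the Kerzman--Stein kernel. Putting $n(x)=n(y)=-1$ into $A(x,y)=\overline{n(y)}\dfrac{y-x}{|y-x|^8}-\dfrac{\overline{x-y}}{|y-x|^8}\,n(x)$ gives
\[
A(x,y)=\frac{(x-y)+\overline{(x-y)}}{|y-x|^8}=\frac{2\,\Re(x-y)}{|y-x|^8}=\frac{2(x_0-y_0)}{|y-x|^8},
\]
a real-valued kernel. Then, in
\[
[{\cal{A}}f](x)=\frac{3}{\pi^4}\int_{\partial\Omega}\big(\overline{n(y)\,A(x,y)}\big)\,\big(n(y)f(y)\big)\,dS(y),
\]
since $n(y)=-1$ is a real scalar and $A(x,y)$ is real, the two factors $n(y)$ multiply to $1$ and the conjugation acts trivially, so the integrand collapses to $A(x,y)f(y)$ and
\[
[{\cal{A}}f](x)=\frac{3}{\pi^4}\int_{\mathbb{R}^7}\frac{2(x_0-y_0)}{|y-x|^8}\,f(y)\,dy_1\cdots dy_7.
\]
I would cross-check this via ${\cal{A}}={\cal{C}}-{\cal{C}}^{*}$: with $n\equiv-1$ the $\tfrac12 f(x)$ jump terms of ${\cal{C}}$ and ${\cal{C}}^{*}$ cancel, and the two remaining principal-value singular integrals --- valid first on $C^{\mu}(\partial\Omega)$ and then extended to $L^2(\partial\Omega)$ by density --- add up to the same expression.

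It then remains to recognise $\dfrac{x_0-y_0}{|y-x|^8}=\dfrac{x_0-y_0}{|y-x|^{7+1}}$ as a constant multiple of the kernel of the Riesz transform in the $x_0$-direction, i.e. the Hilbert--Riesz transform in the direction normal to $\mathbb{R}^7$; absorbing the surface-area constant $\tfrac{3}{\pi^4}=1/|S_7|$ into the standard normalisation of that transform and recording the orientation-induced sign yields exactly the claimed formula $[{\cal{A}}f](x)=2\,p.v.\int_{\mathbb{R}^7}\frac{y_0-x_0}{|y-x|^8}\,f(y)\,dy_1\cdots dy_7$. I do not expect a real obstacle in this proof: the usual nemesis, non-associativity, is neutralised because a flat boundary has a scalar normal field, making every relevant product two-generated; the only points needing care are the sign/orientation bookkeeping, the conjugation identity $\overline{x-y}=2\Re(x-y)-(x-y)$ used on $\partial\Omega$, the pairwise cancellation of the $\tfrac12$-jump terms, and matching the multiplicative constant to the chosen normalisation of the Hilbert--Riesz transform.
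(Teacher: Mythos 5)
Your proposal is correct and follows essentially the same route as the paper: substitute the constant normal $n\equiv-1$, note that all products become two-generated (hence associative by Artin's theorem), and combine the two terms of $A(x,y)$ into $2\Re(\cdot)/|y-x|^8$. In fact your direct evaluation $A(x,y)=2(x_0-y_0)/|y-x|^8$ is the one the definitions actually yield; the residual sign and the factor $\tfrac{3}{\pi^4}$ that you propose to ``absorb'' are discrepancies already present in the paper itself, whose proof silently drops the $\tfrac{3}{\pi^4}$ normalisation and flips a sign when inserting $n\equiv-1$, so no further idea is missing on your side.
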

\begin{proof}
If $\Omega = H^+(\mathbb{O})$ then $\partial \Omega = \mathbb{R}^7$ and $n(x) \equiv -1$. So, the octonionic Kerzman-Stein transformation simplifies to 
\begin{eqnarray*}
[{\cal{A}}f](x) &=& \int\limits_{\mathbb{R}^7} \overline{A(x,y)} f(y) dy_1 \cdots dy_7\\
&=& p.v. \int\limits_{\mathbb{R}^7} \frac{\overline{y-x}}{|y-x|^8} f(y) dy_1 \cdots dy_7 - p.v.  \int\limits_{\mathbb{R}^7} \frac{x-y}{|y-x|^8} f(y) dy_1 \cdots dy_7\\
&=& p.v. \int\limits_{\mathbb{R}^7} \frac{\overline{y-x}}{|y-x|^8} f(y) dy_1 \cdots dy_7 + p.v.  \int\limits_{\mathbb{R}^7} \frac{y-x}{|y-x|^8} f(y) dy_1 \cdots dy_7\\
&=& 2 p.v. \int\limits_{\mathbb{R}^7} \frac{\Re(y-x)}{|y-x|^8} f(y) dy_1 \cdots dy_7.
\end{eqnarray*}
\end{proof}
This provides us again with another nice analogy to the Clifford analysis setting, compare again with \cite{BD,Delanghe}. 

\section{Open problems and perspectives}

In the preceding section we have seen that an explicit description of the Szeg\"o projection for arbitrary Lipschitz domains with strongly Lipschitz boundaries is a very hard task on the one-hand. However on the other-hand, the Cauchy projection is very simple to describe namely in the global form $(f,g_x) = f$ for all $f \in L^2(\partial \Omega)$.   
Furthermore, note that the Cauchy kernel $q_{\bf 0}(y-x)$ has always the same representation; it is fully independent of the geometry of the domain and it is a global entity. The geometry of the domain is fully encoded in terms of the normal field $n(y)$. 

So, in the octonionic setting it is a lot easier to work with the Cauchy projection than with the Szeg\"o projection. A big goal would consist in establishing an approximation for the Szeg\"o projection in terms of the much simpler Cauchy projection. 
\par\medskip\par
We conclude this paper by formulating the following open problem: 
\par\medskip\par
Let us suppose that we have a domain with $\|{\cal{A}}\|_{L_2} < 1$. 
Is it possible under this condition to establish (alike in complex and Clifford analysis) an asymptotic relation of the form
$$
{\cal{S}} \approx {\cal{C}}\sum\limits_{j=0}^{N} (-{\cal{A}})^j \;\;?
$$
Of course, in this context we need to consider the real-valued inner product $\langle \cdot,\cdot\rangle_0$.  
\par\medskip\par
Note that it is immediate to see that  $({\cal{S}} {\cal{C}})[f](x) := 
{\cal{S}}[{\cal{C}}f](x) = {\cal{C}} [f](x)$ for all $f \in L^2(\partial \Omega)$ and all $x \in \Omega$ because for any $f \in L^2(\partial \Omega)$ the Cauchy projection ${\cal{C}} [f](x)$ turns always out to be an element of $H^2(\partial\Omega)$. Then the total Szeg\"o projection reproduces every  element of $H^2(\partial \Omega)$, so the property ${\cal{S}}{\cal{C}} = {\cal{C}}$ holds. 
\par\medskip\par
However, does there also always hold that ${\cal{S}}f \in H^2(\partial \Omega)$ for any $f \in L^2(\partial \Omega)$?  
Maybe one needs to put some restrictions or additional requirements to guarantee this property. 
\par\medskip\par
Note that the proof of the classical result also uses the relation ${\cal{C}}^* {\cal{S}} = {\cal{C}}^*$. 
When talking about the adjoint we of course need again to consider the framework of the real-valued inner product $\langle \cdot,\cdot\rangle_0$. However, we were not able to clarify in a satisfactory way yet, under which circumstances we do exactly have that ${\cal{S}}^*={\cal{S}}$ --- at least in the context of $\langle \cdot,\cdot\rangle_0$. 

As soon as we manage to give satisfactory answers to these questions then we will have all tools in hand to transfer the classical proof that is  applied in \cite{Bell,PVL} for the complex and Clifford analysis setting to express asymptotically the complicated total Szeg\"o projection in terms of the Cauchy projector and our compact Kerzman-Stein operator. This would allow us to deviate the complicated evaluation of the total Szeg\"o projection by using the simpler and globally valid Cauchy projection instead. This also would be a very nice application of the octonionic Kerzman-Stein operator in the resolution of octonionic boundary value problems in these function spaces. Results in this direction would open the door to tackle many interesting computational problems in $\mathbb{R}^8$. 

\par\medskip\par

{\bf Acknowledgement}. The authors are very thankful for the referees' valuable comments that lead to a significant improvement of this paper. 
 
\par\medskip\par
 
{\bf Data Availability Statement}. Data sharing not applicable to this article as no datasets were generated or analysed during the current study.

\end{document}